\newtheorem{theorem}{Theorem}
\newtheorem{lemma}[theorem]{Lemma}
\theoremstyle{definition}
\newtheorem{defn}{Definition}
\newtheorem{example}{Example}
\author{Adam Borchert
 \and Narad Rampersad\thanks{The second author is supported by an
  NSERC Discovery Grant.}}
\title[Permutation complexity of images of Sturmian words]{Permutation
  complexity of images of Sturmian words by marked morphisms}
\affiliation{
  Department of Mathematics and Statistics,
  University of Winnipeg,
  CANADA}
\keywords{permutation complexity, Sturmian words, morphisms}
\begin{document}
\publicationdetails{20}{2018}{1}{20}{4042}
\maketitle
\begin{abstract}
We show that the permutation complexity of the image of a Sturmian
word by a binary marked morphism is $n+k$ for some constant $k$ and
all lengths $n$ sufficiently large.
\end{abstract}

\section{Introduction}
The permutation complexity of an infinite aperiodic word is a concept
introduced by Makarov \cite{Mak06}.  It is based on the following
idea:  Given an infinite word $\omega$, consider the linear order
$\pi_\omega$ on $\mathbb{N}$ induced by the lexicographic order on the
successive shifts of $\omega$.  The permutation complexity of $\omega$
is the function that counts the number of distinct subpermutations of
$\pi_\omega$ of a given length.  Makarov \cite{Mak09} proved that any
Sturmian word $s$ has $n$ subpermutations of length $n$ for all $n
\geq 1$.  In this paper, we determine the permutation complexity of
any word $T(s)$, where $s$ is a Sturmian word, and $T$ is a marked binary
morphism (``marked'' means that the images of the morphism on letters
begin with different letters and end with different letters).

In this paper, we only consider infinite permutations obtained from
infinite words in the manner described above, but
there is also a more general theory of infinite permutations \cite{FF07}.
Avgustinovich, Frid, and Puzynina \cite{AFP17} studied a subclass of
these infinite permutations called \emph{equidistributed
  permutations} and showed that within this family, the infinite permutations
of minimal permutation complexity are exactly those obtained from
Sturmian words.

  There have been several other recent results on permutation
complexity of infinite words.  Here we mention only Widmer's work
\cite{Wid11}, in which he computes the permutation complexity function
of the Thue--Morse word---this turns out to be a rather non-trivial
task---and Valyuzhenich's work \cite{Val14}, which generalizes this
result somewhat.  We should point out that while it may seem rather
unsatisfying to report a result that only applies to marked morphisms,
it appears to be rather difficult to deal with arbitrary morphisms:
in Valyuzhenich's work, he also restricts his attention to marked
morphisms, and even in this case the proofs of his results are quite
difficult.

\section{Preliminaries}
Given an ordered alphabet $\Sigma$, the
\emph{lexicographic order} on $\Sigma^*$ is the order defined as
follows: $u \leq v$ if either
\begin{itemize}
\item $u$ is a prefix of $v$, or
\item $u=xay$, $v=xbz$ for some words $x,y,z$ and letters $a<b$.
\end{itemize}
We write $u<v$ if $u \leq v$ and $u \neq v$.

Let $\omega = \omega_0\omega_1\omega_2\cdots$ be an infinite,
aperiodic word over the alphabet $\{0,1\}$ (throughout this paper all
words will be binary).  We denote the $i$-th letter, $\omega_i$, by
$\omega[i]$, and the factor $\omega_i\omega_{i+1}\cdots\omega_j$ by
$\omega[i,j]$.  The $i$-th shift of $\omega$ is the infinite word
$\omega[i,\infty] = \omega_i\omega_{i+1}\omega_{i+2}\cdots$.  Let the shifts
of $\omega$ be ordered lexicographically (with respect to the order
$0<1$).  Let $\pi_\omega$ be the order on $\mathbb{N}$ defined by
$\pi_\omega(i) < \pi_\omega(j)$ if $\omega[i,\infty] < \omega[j,\infty]$, and
$\pi_\omega(j) < \pi_\omega(i)$ otherwise.

For $i<j$, let $\pi_\omega[i,j]$ denote the permutation of
$\{1,2,\ldots,j-i+1\}$ for which $\pi_\omega[i,j](k)<\pi_\omega[i,j](\ell)$
exactly when $\pi_\omega(i+k-1)<\pi_\omega(i+\ell-1)$.  If $j-i+1=n$
we say that the permutation $\pi_\omega[i,j]$ is a \emph{finite
  subpermutation of length $n$} of $\pi_\omega$.  The
\emph{permutation complexity} of $\omega$ is the function $f_\omega(n)$ that
associates every $n$ to the number of finite subpermutations of length
$n$ of $\pi_\omega$.

If $u$ is a factor of length $n$ of $\omega$, define
\[
\mbox{Perm}_\omega(u) = \{\pi_\omega[i,i+n-1] : \omega[i,i+n-1] = u\}.
\]
We say that $u$ has $|\mbox{Perm}_\omega(u)|$ permutations.
Furthermore, if $\mbox{Perm}_\omega(u) \cap \mbox{Perm}_\omega(v) \neq
\emptyset$, we say that $u$ and $v$ are \emph{factors with the same
  permutation}.

Our goal is to analyze the permutation complexity of the morphic image
of Sturmian words.  A \emph{Sturmian word} is an infinite word with
factor complexity $n+1$ for all $n \geq 0$ (the \emph{factor
  complexity} of an infinite word $w$ is the function giving the number of
distinct factors of $w$ of length $n$).  Let $s$ be a Sturmian
word over $\{0,1\}$ and let $T : \{0,1\} \to \{0,1\}$ be a morphism
such that $T(s)$ is aperiodic.  Then $T(s)$ has factor complexity
$n+t$ for some constant $t$ and all $n$ sufficiently large \cite{Pau75}.  Makarov
\cite{Mak09} showed that $f_s(n)=n$ for all $n \geq 2$.  We conjecture
that $f_{T(s)}(n) = n+k$ for some constant $k$ and all $n$
sufficiently large; however, we are only able to prove this for
``marked'' morphisms (defined below).

If the first letters of $T(0)$ and $T(1)$ are both different and the
last letters of $T(0)$ and $T(1)$ are both different, then we say that
$T$ is a \emph{marked morphism}.
If $T(0)$ and $T(1)$ are powers of a common word we say that $T$ is a
\emph{periodic morphism}; if not we say that $T$ is an \emph{aperiodic
  morphism}.  Note that a marked morphism is necessarily aperiodic.

A factor $u$ of an infinite word $s$ is \emph{right special} (resp.\
\emph{left special}) if both $u0$ and $u1$ (resp.\ $0u$ and $1u$) are
factors of $s$.  If $s$ is Sturmian then for all $n \geq 0$ the word $s$
contains exactly one right special factor of length $n$ and exactly one
left special factor of length $n$ (see \cite[Section~2.1.3]{Lot02}).  If
$T(s)$ is aperiodic then for $n$ sufficiently large the word $T(s)$
contains exactly one right special factor of length $n$ and exactly
one left special factor of length $n$.

An infinite word $s$ is \emph{uniformly recurrent} if for every
length $\ell$ there is another length $L$ such that every factor of
$s$ of length $L$ contains every factor of $s$ of length $\ell$.  If
$s$ is Sturmian, then $s$ and $T(s)$ are both uniformly recurrent.

\section{Recognizability of a morphism}
We also need some results concerning the recognizability of the
morphism $T$.  The basic definitions are given in terms of bi-infinite
words (following \cite{BSTY17}).

\begin{defn}
Let $\theta:A^* \to B^*$ be a non-erasing morphism; let $x = \cdots
x_{-1} x_0 x_1 \cdots$ be a bi-infinite word with each $x_i \in A$;
and let $y=\theta(x)$.  The set of \emph{cutting points} of
$(\theta,x)$ is the set
\[
C(\theta,x) = \{0\} \cup \{|\theta(x[0,i])| : i \geq 0\} \cup
\{-|\theta(x[-i,-1])| : i > 0\}.
\]
\end{defn}

\begin{defn}\label{recog_def}
Let $\theta:A^* \to B^*$ be a non-erasing morphism; let $x \in
A^{\mathbb{Z}}$; and let $y=\theta(x)$.  The morphism $\theta$ is
\emph{recognizable in the sense of Moss\'e for $x$} if there exists
$\ell$ such that, for every $m \in C(\theta,x)$ and $m' \in
\mathbb{Z}$, the equality $y[m-\ell,m+\ell-1] = y[m'-\ell,m'+\ell-1]$
implies that $m' \in C(\theta,x)$.
\end{defn}

In the special case of binary morphisms we have the following.

\begin{lemma}\label{mosse}
Let $T : \{0,1\}^* \to \{0,1\}^*$ be an aperiodic morphism.  Then $T$
is recognizable in the sense of Moss\'e for any aperiodic word $x \in
\{0,1\}^\mathbb{Z}$.
\end{lemma}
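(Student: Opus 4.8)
The plan is to argue by contradiction, combining a compactness argument with the fact that, for a binary aperiodic morphism, the pair $\{T(0),T(1)\}$ is a code. Suppose $T$ is not recognizable for some aperiodic $x\in\{0,1\}^{\mathbb{Z}}$, and write $y=T(x)$, $u=T(0)$, $v=T(1)$, $M=\max(|u|,|v|)$, and let $S$ denote the shift map on $\{0,1\}^{\mathbb{Z}}$. Negating Definition~\ref{recog_def}, for every $\ell$ there are $m_\ell\in C(T,x)$ and $m'_\ell\notin C(T,x)$ with $y[m_\ell-\ell,m_\ell+\ell-1]=y[m'_\ell-\ell,m'_\ell+\ell-1]$. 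After translating so that $m_\ell=0$, this says $S^{m'_\ell}y$ agrees with $y$ on $[-\ell,\ell-1]$, so $S^{m'_\ell}y\to y$. Passing to a subsequence along which the offset of $0$ inside its block converges, the block decompositions of $S^{m'_\ell}y$ (each block of length at most $M$) converge to a second factorization of $y$ into blocks from $\{u,v\}$. In this way I obtain two factorizations of the one bi-infinite word $y$, with cutting-point sets $P$ (from $x$, with $0\in P$) and $Q$ (the limit, with $0\notin Q$ since each $m'_\ell$ lies strictly inside a block); thus $P\neq Q$.

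Because $T$ is aperiodic, $u$ and $v$ are not powers of a common word, so by the classical two-word characterization $\{u,v\}$ is a code; since reversal preserves this property, it is also a code read right-to-left. I would use this to prove $P\cap Q=\emptyset$: if $P$ and $Q$ shared a cutting point $c$, then the right-infinite word $y[c,\infty]$ would admit two $\{u,v\}$-factorizations starting at $c$, and the left-infinite word ending at $c-1$ two factorizations ending at $c$; unique factorizability over a code (in each direction) forces these to coincide, whence $P=Q$, a contradiction. Hence the two cutting-point sets are disjoint.

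The main obstacle is the final step: extracting a contradiction from two disjoint $\{u,v\}$-factorizations of $y$. Here block lengths are bounded by $M$, so the offset $\delta(p)\in\{1,\dots,M-1\}$ of each $P$-cut $p$ past the preceding $Q$-cut, together with the local block types, ranges over a finite set of configurations. The overlap between a $P$-block and the $Q$-blocks meeting it imposes letter equalities that make the evolution of these configurations along $\mathbb{Z}$ deterministic; by pigeonhole some configuration recurs, which forces the sequence of $P$-block types, and hence $x$, to be periodic, contradicting the aperiodicity of $x$. Equivalently, the overlaps force $u$ and $v$ to share a common period and thus (by Fine--Wilf) to be powers of a common word, contradicting that $T$ is aperiodic. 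I expect this periodicity extraction to be the delicate part, the earlier steps being either soft (compactness) or classical (the code property).

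Alternatively, one can bypass the combinatorics of the last step by invoking the general recognizability theorem for aperiodic points from \cite{BSTY17}: in the binary case its hypotheses reduce exactly to $\{T(0),T(1)\}$ being a code, which aperiodicity of $T$ guarantees. I would likely present the result through that citation for brevity, keeping the compactness reduction above as the conceptual backbone.
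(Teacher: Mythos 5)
Your closing fallback is, in fact, the paper's entire proof: Lemma~\ref{mosse} is established there by a bare citation of Theorems~3.1 and~2.5(1) of \cite{BSTY17}. So if you present the result that way, you match the paper exactly, and that part of your proposal is fine. The self-contained argument you sketch, however, has genuine gaps, and they sit exactly where the real content of the theorem lies. First, the compactness step does not return you to $y$: after recentering at $m_\ell$ you are shifting $S^{m_\ell}y$, so the common limit $z$ of $S^{m_\ell}y$ and $S^{m'_\ell}y$ is only some point in the orbit closure of $y$. It has the form $T(x')$ with $x'$ a limit of shifts of $x$, and $x'$ can be periodic even though $x$ is aperiodic (the orbit closure of an aperiodic point may contain periodic points). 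A contradiction with ``$x'$ is periodic'' at the end of your argument is therefore no contradiction at all; this is precisely why \cite{BSTY17} must formulate recognizability ``at aperiodic points,'' and why the reduction is not soft. Note also that the lemma is asserted for \emph{every} aperiodic $x$, not just uniformly recurrent ones; Moss\'e's classical proof and most elementary variants lean on recurrence, and removing that hypothesis is the recent, nontrivial contribution of \cite{BSTY17}.

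Second, the ``periodicity extraction'' is not merely delicate --- it is the whole theorem --- and the mechanism you describe does not close. The configuration (offset plus local block types) does not evolve deterministically along $\mathbb{Z}$: the next $P$-block is dictated by $x$, which is arbitrary, so a recurring configuration does not force periodicity of $x$. The ``equivalently, by Fine--Wilf'' version is false as stated: for $T(0)=01$, $T(1)=10$ (an aperiodic morphism), the bi-infinite word $(01)^{\mathbb{Z}}$ carries two disjoint $\{T(0),T(1)\}$-factorizations, yet $01$ and $10$ are not powers of a common word. The bi-infinite defect theorem yields only an alternative --- either the two-element set is periodic or the word is --- and ruling out the second branch (and relating it back to the original $x$ rather than the limit point) is where all the work is. Your disjointness step also silently uses that a two-element code is an $\omega$-code in both directions; this is true but needs proof. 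In short: cite \cite{BSTY17} as you suggest; the direct route is not salvageable at the level of detail given.
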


\begin{proof}
This follows from \cite[Theorems~3.1 and 2.5(1)]{BSTY17}.
\end{proof}

\begin{defn}
Let $\theta:A^* \to B^*$ be a non-erasing morphism; let $x \in
A^{\mathbb{Z}}$; and let $w$ be a non-empty factor of $\theta(x)$.  An
\emph{interpretation of $w$ in $x$} is a triple $(p,z,s)$ such that
\begin{itemize}
\item $z=z_0\cdots z_{n-1}$ is a factor of $x$ (each $z_i \in A$),
\item $p$ is a proper prefix of $\theta(z_0)$,
\item $s$ is a proper suffix of $\theta(z_{n-1})$, and
\item $\theta(z)=pws$.
\end{itemize}
\end{defn}

\begin{lemma}\label{marked_interpretation}
Let $T : \{0,1\}^* \to \{0,1\}^*$ be an aperiodic, marked morphism; let $x \in
\{0,1\}^\mathbb{Z}$; let $\ell$ be the constant of Definition~\ref{recog_def};
and let $w$ be a factor of $T(x)$ of length at least
$L = 2\ell+\max\{|T(0)|,|T(1)|\}$. Then $w$ has a unique
interpretation in $x$.
\end{lemma}

\begin{proof}
Consider two occurrences of $w$ in $T(x)$.  In the first occurrence there
is some position $m$ such that $m$ is at distance at least $\ell$ from
both the beginning and the end of $w$ and is a cutting point.  By
Lemma~\ref{mosse} and Definition~\ref{recog_def}, the corresponding
position in the second occurrence of $w$ is also a cutting point.
Now, since $T$ is marked, the interpretations of both
occurrences are uniquely determined.
\end{proof}

\section{Permutation complexity of $T(s)$}
Let $s$ be a Sturmian word over $\{0,1\}$ and let $T : \{0,1\}^* \to
\{0,1\}^*$ be a marked morphism.  Let $\bar{s}$ denote the word
obtained from $s$ by applying the morphism $0 \to 1, 1 \to 0$, and let
$\bar{T}$ denote the morphism defined by $0 \to T(1), 1 \to T(0)$.
Note that $\bar{s}$ is again Sturmian and 
we have $T(s) = \bar{T}(\bar{s})$.  Hence, without loss of generality,
we suppose that $T(0)$ begins with $0$ and $T(1)$ begins with $1$
(replacing $T$ and $s$ with $\bar{T}$ and $\bar{s}$ if necessary).

\begin{theorem}\label{main_thm}
There exist constants $N$ and $k$ such that 
the permutation complexity of $T(s)$ is $n+k$ for $n>N$.
\end{theorem}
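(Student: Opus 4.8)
The plan is to reduce the comparison of two shifts of $\omega := T(s)$ to a comparison of shifts of the underlying Sturmian word $s$, and then to transport Makarov's count $f_s(n)=n$ through the morphism, paying only a bounded and eventually constant correction. First I would record an order-preservation property of $T$: since $T$ is marked and we have normalized so that $T(0)$ begins with $0$ and $T(1)$ begins with $1$, two infinite words with longest common prefix $x$, one continuing with $0$ and the other with $1$, are sent to $T(x)T(0)\cdots$ and $T(x)T(1)\cdots$, which first differ (as $0$ against $1$) immediately after $T(x)$. Hence $T$ is strictly order preserving on infinite words, and in particular, for cutting points $m=|T(s[0,i-1])|$ and $m'=|T(s[0,i'-1])|$, we have $\omega[m,\infty]<\omega[m',\infty]$ if and only if $s[i,\infty]<s[i',\infty]$. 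This identifies the order on cutting points of $\omega$ with the Sturmian order $\pi_s$ on the corresponding positions of $s$.

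Next I would analyze an arbitrary pair of positions $p,q$ of $\omega$. Writing $p=c_i+r$ and $q=c_{i'}+r'$, where $c_j=|T(s[0,j-1])|$ is the $j$-th cutting point and $r,r'$ are offsets inside the blocks $T(s[i])$ and $T(s[i'])$, the shift $\omega[p,\infty]$ consists of the suffix of the block $T(s[i])$ beginning at offset $r$ followed by $T(s[i+1,\infty])$, and likewise for $q$. Comparing the two block suffixes, each of length at most $M:=\max\{|T(0)|,|T(1)|\}$, either resolves the comparison within a bounded region, in which case the outcome depends only on the finitely many phases $(s[i],r)$ and $(s[i'],r')$, or forces the two shifts to share a long common prefix. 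By aperiodicity such a prefix is finite, and being followed by both $0$ and $1$ it is a right special factor of $\omega$; for $n$ large $\omega$ has a unique right special factor of each length, so this case is rigidly constrained, and order preservation then reduces the comparison to one of two shifts of $s$. Thus every pairwise comparison is governed either by bounded local phase data or by the Sturmian order $\pi_s$.

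I would then fix a factor $w$ of $\omega$ of length $n$ large enough that Lemma~\ref{marked_interpretation} applies. Its unique interpretation fixes the entire block-and-offset structure of every occurrence of $w$, hence fixes all of the locally governed comparisons above; the permutation of an occurrence of $w$ is therefore determined by $w$ together with a single subpermutation of $\pi_s$, namely the relative order of the shifts of $s$ indexed by the cutting points falling inside the window, a subpermutation of length proportional to $n$. Using $f_s(n)=n$ together with the uniqueness of the right special and left special factors of both $s$ and $\omega$, I would compute $f_{T(s)}(n)$ by comparing it against the factor complexity $n+t$ of $\omega$: generically each factor carries exactly one permutation and distinct factors carry distinct permutations, giving the count $n+t$, while the uniqueness of the special factors confines the discrepancy, namely the factors carrying several permutations and the distinct factors that collapse to a common permutation, to a bounded set of exceptional factors attached to the special factors of $s$ and of $\omega$.

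The main obstacle I anticipate is not the reduction itself but pinning down the additive constant \emph{exactly}, that is, proving that this correction term stabilizes to a single value $k$ for all $n>N$ rather than merely staying bounded. This requires showing that the exceptional behaviour near the special factors settles down for large $n$, in effect the same eventual-stabilization phenomenon that forces the factor complexity of $\omega$ to equal exactly $n+t$, and carefully matching the bounded number of extra permutations created near the right special factor against the bounded number of factor pairs that merge to a common permutation near the left special factor. Handling the cascading comparisons of the second case uniformly, so that this bookkeeping is genuinely independent of $n$, is the delicate step on which the exact form $n+k$ of Theorem~\ref{main_thm} depends.
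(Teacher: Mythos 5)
Your overall bookkeeping --- permutation complexity equals factor complexity $n+t$ plus a bounded, eventually constant correction coming from factors with several permutations and from distinct factors sharing a permutation --- is exactly the paper's plan. But there is a genuine gap at the central step. You assert that the unique interpretation of $w$ (via Lemma~\ref{marked_interpretation}) ``fixes all of the locally governed comparisons,'' so that the permutation of an occurrence of $w$ is determined by $w$ together with one subpermutation of $\pi_s$. That is false, and if it were true it would (via Lemma~\ref{makarov}) force every large factor of $T(s)$ to have a \emph{unique} permutation, contradicting the paper's Fibonacci example, where $10$ factors of each length $n\geq 14$ carry two permutations. The failure mode: a comparison of two shifts starting at positions $p,q$ inside the window of $w$ may have a common prefix $xu$ that is short (so it is ``locally resolved'') but whose resolution point --- the first position where the two continuations differ --- falls just beyond the right edge of $w$. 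Those letters are not determined by $w$ or by its interpretation, and different occurrences of $w$ can resolve the comparison in opposite ways. This is precisely the mechanism of Equation~\eqref{std_dual_perm}, and it is what produces factors with two permutations.

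Consequently the ``delicate step'' you defer at the end is not a technicality but the bulk of the proof. The paper spends four lemmas on it: Lemma~\ref{dual_perm_small} shows (using recognizability when the common prefix is long, and a periodicity bound via Lemma~\ref{exponent} when it is short) that the words $u,v$ in Equation~\eqref{std_dual_perm} are small; Lemma~\ref{three_perms} then rules out more than two permutations for large factors by exhibiting two nearby right special prefixes and deriving a forbidden long periodic factor; and Lemmas~\ref{nondecreasing_dual_factors} and~\ref{non_inc} show the count of two-permutation factors is eventually constant by a careful analysis of how the two-permutation property propagates under left extension, again leaning on the uniqueness of the right special factor and on Lemma~\ref{exponent}. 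None of these arguments is present in your proposal, so as written it establishes only that the correction term is bounded (at best), not that it is eventually the exact constant $k$ that the theorem requires. Your first paragraph (order preservation of $T$ on infinite words) and your use of the unique right special factor to control collapsing pairs (the paper's Lemma~\ref{rs_same_perm}) are correct and do match the paper.
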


Plan of the proof:
\begin{itemize}
	\item Lemma \ref{rs_same_perm} handles distinct factors having the same permutation.
	\item Lemma \ref{min_dual_perm} shows that minimal factors with multiple permutations are small.
	\item Lemma \ref{three_perms} shows that other than small exceptions, factors of $T(s)$ have at most two permutations.
	\item Lemmas \ref{nondecreasing_dual_factors} and \ref{non_inc} show that the number of factors with two permutations is (eventually) constant.
\end{itemize}
Once these facts are proved, we conclude that (other than small
exceptions) there are exactly $l$ factors with two permutations of
length $n$ for each $n$, and that every factor of $T(s)$ has exactly
one or two permutations. The result follows with either $k=t+l$ or $k=t+l-1$ (depending on the result of Lemma \ref{rs_same_perm}) where $t$ is the integer such that $T(s)$ has $n+t$ factors of length $n$.

\begin{example}
Let $s$ be the Fibonacci word; i.e. $s$ is the fixed point of the
morphism $0 \to 01$, $1 \to 0$.  Let $T$ be the morphism that maps $0
\to 0110$ and $1 \to 11$.
For $n\geq 14$, the word $T(s)$ has exactly $10$ factors of length $n$
with two permutations.
\end{example}

First, we deal with distinct factors of $s$ having the same permutation. We start with some basic general results. 

We need the following important fact due to Makarov
\cite[Lemma~1]{Mak06}:  Let $u$ and $v$ be distinct factors of $s$ of
the same length (greater than $1$) that have the same permutation.  Then $u$
and $v$ differ only in the last position.

\begin{lemma}
	\label{rs_same_perm}
	In $T(s)$, for $n$ sufficiently large, there is at most one
        pair of distinct factors of length $n$ with the same
        permutation.  If there are such pairs for infinitely many $n$,
        then there are such pairs for all $n$.
\end{lemma}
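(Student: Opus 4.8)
The plan is to reduce the whole statement to two facts about $T(s)$: a version of Makarov's ``differ only in the last position'' property, and the uniqueness and nesting of its right special factors. First I would observe that the argument behind Makarov's fact applies verbatim to any aperiodic word, in particular to $T(s)$. Indeed, for any window $T(s)[i,i+n-1]$ and any $k\le n-2$, the shift at position $k$ is $T(s)[i+k]\cdot Y$, where $Y$ is the shift at position $k+1$; since $T(s)$ is aperiodic we have $Y\neq 0^\infty$ and $Y\neq 1^\infty$, so prepending a $0$ strictly decreases and prepending a $1$ strictly increases. Hence positions $k$ and $k+1$ appear in increasing or decreasing order in $\pi_{T(s)}[i,i+n-1]$ according as $T(s)[i+k]=0$ or $1$. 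Thus the permutation determines the letters $T(s)[i],\dots,T(s)[i+n-2]$, so two distinct factors of length $n>1$ with the same permutation agree everywhere except in the last letter.

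Consequently any same-permutation pair of length $n$ has the form $\{w0,w1\}$ with $|w|=n-1$, and since both $w0$ and $w1$ are factors of $T(s)$, the word $w$ is right special. For $n$ large enough that $T(s)$ has a unique right special factor of length $n-1$ (guaranteed in the preliminaries), $w$ is forced, so there is at most one such pair. This settles the first assertion.

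For the dichotomy, let $P(n)$ denote the statement ``the two one-letter extensions $w_{n-1}0$ and $w_{n-1}1$ of the right special factor $w_{n-1}$ of length $n-1$ have the same permutation''; by the previous paragraph a same-permutation pair of length $n$ exists iff $P(n)$ holds. The plan is to prove $P(n)\Rightarrow P(n-1)$ for all large $n$. The structural input is that right special factors nest as suffixes: writing $w_{n-1}=aw'$ with $a$ a letter and $|w'|=n-2$, both $w'0$ and $w'1$ occur (as suffixes of $w_{n-1}0$ and $w_{n-1}1$), so $w'$ is right special, and by uniqueness $w'=w_{n-2}$. Now suppose $P(n)$ holds, witnessed by occurrences of $w_{n-1}0$ at $i$ and of $w_{n-1}1$ at $i'$ realizing a common permutation $\sigma$. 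Deleting the first position, we have $T(s)[i+1,i+n-1]=w_{n-2}0$ and $T(s)[i'+1,i'+n-1]=w_{n-2}1$, and both of these sub-windows realize the single permutation obtained from $\sigma$ by deleting its first entry and standardizing. Hence $w_{n-2}0$ and $w_{n-2}1$ share a permutation, i.e.\ $P(n-1)$ holds.

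Finally, if $P(n)$ holds for infinitely many $n$, then given any large $m$ I choose some $n\ge m$ with $P(n)$ and descend $P(n)\Rightarrow P(n-1)\Rightarrow\cdots\Rightarrow P(m)$, so $P$ holds for all sufficiently large $n$, which is the second assertion. I expect the main obstacle to be the dichotomy rather than the counting: one must verify the suffix-nesting of right special factors and, crucially, check that deleting the first position of two equal finite permutations yields equal standardized permutations of the sub-windows, so that the common permutation genuinely descends from length $n$ to length $n-1$. The adjacency characterization behind Makarov's property together with the uniqueness of right special factors then makes the first assertion essentially immediate.
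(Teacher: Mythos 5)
Your proof is correct and follows essentially the same route as the paper's: reduce a same-permutation pair to $\{w0,w1\}$ with $w$ right special via Makarov's last-letter property, invoke uniqueness of the right special factor for large $n$, and obtain the dichotomy by noting that deleting the first position (equivalently, passing to equal-length suffixes, as the paper phrases it) preserves the property of having a common permutation. The only difference is that you re-derive Makarov's adjacency argument for arbitrary aperiodic words where the paper simply cites it, which is a harmless (indeed clarifying) addition since the paper's quoted statement is nominally about $s$ rather than $T(s)$.
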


\begin{proof}
Let $u$ and
$v$ be factors of $T(s)$ of length $n$ and suppose that $u$ and $v$
have the same permutation.  Then write $u=w0$ and $v=w1$; we see that $w$ is
right-special.  If $n$ is sufficiently large, the word $T(s)$ contains exactly one
right-special factor of length $n-1$, so there can be at most one such
pair $u, v$.  Now if $u$ and $v$ have the same permutation, then so do
any of their equal-length suffixes, so if there are such pairs for
infinitely many $n$, then there are such pairs for all $n$.
\end{proof}

\begin{defn}
Let $u$ and $v$ be finite words.  A morphism $T$ is
\emph{order-preserving} if whenever $u \leq v$ we have
$T(u) \leq T(v)$.  If the same holds true whenever $u$ and $v$ are
infinite words we say that $T$ is \emph{order-preserving on infinite
  words}.
\end{defn}

These morphisms are studied further in Section~\ref{sec:order_preserving}.
Since we have assumed that $T(0)$ starts with $0$ and
$T(1)$ starts with $1$, the morphism $T$ is order-preserving on infinite
  words.

We now examine when it is possible for a factor $w$ of length $n$
to have more than one distinct permutation. In this case there must exist two
occurrences of $w$ in $T(s)$---say at positions $i$ and $i'$---and
integers $\ell$ and $\ell'$ satisfying $0 \leq \ell, \ell' \leq n-1$ such that
$T(s)[i+\ell,\infty] < T(s)[i+\ell',\infty]$ and $T(s)[i'+\ell,\infty]
> T(s)[i'+\ell',\infty]$.  It follows that $T(s)[i+\ell] =
T(s)[i+\ell'] = x$, for some letter $x$.  There then must exist
factors $w_0$ and $w_1$ of $T(s)$, each with prefix $w$, having the
following forms:
\begin{align}
\label{std_dual_perm}
w_0 &= \rho_1 x u 0 \gamma = \rho_2 x u 1\\
w_1 &= p_1  x v 1 g = p_2 x v 0, \nonumber
\end{align}
for some words $\rho_1, \rho_2, \gamma, p_1, p_2, g$, where $|\rho_1|
< |\rho_2$ and $|p_1| < |p_2|$,
the common prefix $w$ extends at least to include
the second $x$, and the $x$'s have the same relative indices in $w_0$
and $w_1$. Let us assume that $|u|>|v|$.  We need the following result
\cite[Lemma~3]{Mak09}:

\begin{lemma}[\cite{Mak09}]\label{makarov}
Let $z$ be a factor of length $n+1$ of a Sturmian word $s$.  Then $z$
has exactly one permutation and this permutation is uniquely
determined by the prefix of $z$ of length $n$.
\end{lemma}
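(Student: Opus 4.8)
The plan is to pass to the geometric (rotation) model of $s$ and read off both assertions from the arrangement of a rotation orbit on the circle. Write $s$ as the coding of an irrational rotation: fix $\alpha \in (0,1)$ irrational and an intercept $\rho$, set $p_i = \{i\alpha + \rho\}$ (fractional part) for $i \geq 0$, and normalize coordinates so that the coding arc for the letter $1$ is $I_1 = [1-\alpha,1)$ and the lexicographic cut sits at $0$. The standard fact I will use (see \cite[Chapter~2]{Lot02}, and \cite{Mak09}) is that the lexicographic order on the shifts agrees with the circular order of the orbit cut at the origin: for $i \neq j$ one has $s[i,\infty] < s[j,\infty]$ if and only if $p_i < p_j$ in $[0,1)$. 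Since $\alpha$ is irrational the points $p_0,p_1,\dots$ are pairwise distinct, so this is a genuine linear order, and the permutation $\pi_s[i,i+n]$ of a window of $n+1$ consecutive positions is exactly the relative order in $[0,1)$ of the orbit points $p_i,p_{i+1},\dots,p_{i+n}$.

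Next I would compute, for each window length $m$, the two partitions of $[0,1)$ (regarded as the value space of $p_i$) that govern factors and permutations. The letter $s_{i+a}$ equals $1$ precisely when $p_i$ lies in the translated arc $I_1 - a\alpha$, whose two endpoints are $\{-a\alpha\}$ and $\{-(a+1)\alpha\}$; note these thresholds do not involve $\rho$. Hence the length-$m$ factor $s[i,i+m-1]$ is determined by which arc of the partition with boundary set $B_F^{(m)} = \{\{-a\alpha\} : 0 \leq a \leq m\}$ contains $p_i$. On the other hand, the relative linear order of the window $p_i,\dots,p_{i+m-1}$ changes only when one of these points crosses the cut at $0$, i.e. when $p_i$ passes through a point of $B_P^{(m)} = \{\{-a\alpha\} : 0 \leq a \leq m-1\}$; between consecutive points of $B_P^{(m)}$ the whole permutation is constant. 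Both sets consist of distinct points (again by irrationality), giving the expected $m+1$ factor arcs and $m$ permutations, in agreement with $f_s(m)=m$.

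With $m = n+1$ the two assertions are now immediate from set inclusions. Since $B_P^{(n+1)} \subseteq B_F^{(n+1)}$, every length-$(n+1)$ factor arc lies inside a single permutation arc, so all occurrences of $z$ induce the same relative order; thus $z$ has exactly one permutation. Moreover $B_P^{(n+1)} = \{\{-a\alpha\} : 0 \leq a \leq n\} = B_F^{(n)}$, so the permutation partition at window length $n+1$ coincides with the factor partition at length $n$; hence the arc determining the length-$(n+1)$ permutation of $z$ is exactly the arc determined by the length-$n$ prefix of $z$, which is the second assertion. As a consistency check, the single point $\{-(n+1)\alpha\}$ lying in $B_F^{(n+1)}$ but not in $B_P^{(n+1)}$ is precisely what merges two factors into one permutation, matching factor complexity $n+2$ against permutation complexity $n+1$ and agreeing with Makarov's Lemma~1 (the two merged factors differ only in the last position).

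The step I expect to be delicate is the order-correspondence itself, together with the behaviour at the boundary of the coding arc: one must fix the half-open conventions for $I_1$ and for the cut consistently (lower versus upper mechanical words) and verify, by a short induction comparing successive letters, that the comparison of two shifts never "flips" relative to the order of $p_i$ and $p_j$ even when one shift wraps past the cut and the other does not. Once these conventions are pinned down---and using that for the finitely many shifts in a given window all orbit points in sight are distinct---the remaining content is the elementary computation of the two finite point sets $B_F^{(m)}$ and $B_P^{(m)}$ above, and the two inclusions that follow.
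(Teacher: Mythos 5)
The paper does not prove this lemma at all --- it is imported verbatim from Makarov \cite{Mak09} (his Lemma~3) --- so there is no in-paper proof to compare against; your rotation-model argument is correct and is essentially the standard (and Makarov's own) route. The key mechanism, the identification $B_P^{(n+1)} = B_F^{(n)}$ together with $B_P^{(m)} \subseteq B_F^{(m)}$, is exactly right, and the one delicate step you flag (that lexicographic order of shifts agrees with the circular order of the orbit cut at $0$) does go through: the preimage of the cut $0$ under a single rotation step is the other partition endpoint $1-\alpha$, so the first index at which two shifts disagree is always decided by a crossing of $1-\alpha$ rather than by a wrap past $0$, which is precisely the induction you anticipate.
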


We also need the following well-known result about repetitions in
Sturmian words.

\begin{lemma}\label{exponent}
Let $s$ be a Sturmian word and let $T$ be an aperiodic binary morphism.
For any integer $p \geq 1$ there is a constant $K_0(p)$ (resp.\
$K(p)$) such that every factor of $s$ (resp.\ $T(s)$) of period at
most $p$ has length at most $K_0(p)$ (resp.\ $K(p)$).
\end{lemma}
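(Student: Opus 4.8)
The plan is to prove a single general statement and apply it twice: if $w$ is an aperiodic, uniformly recurrent infinite word, then for every $p \geq 1$ there is a bound on the length of factors of $w$ having period at most $p$. Since a Sturmian word $s$ is aperiodic and uniformly recurrent, and since $T(s)$ is likewise aperiodic (our standing assumption) and uniformly recurrent (as noted in the Preliminaries), applying the general statement to $w=s$ yields $K_0(p)$ and to $w=T(s)$ yields $K(p)$. So it suffices to prove the general statement. Note that both hypotheses are genuinely needed: an aperiodic word that fails to be uniformly recurrent, such as $0^{a_1}10^{a_2}10^{a_3}1\cdots$ with $a_i \to \infty$, has arbitrarily long factors of period $1$.

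Suppose toward a contradiction that, for some fixed $p$, the word $w$ has factors of period at most $p$ of arbitrarily large length. A factor of period $q$ is completely determined, as a factor of $z^\omega$, by its prefix $z$ of length $q$. Since there are only finitely many pairs $(q,z)$ with $q \leq p$ and $z$ a word of length $q$, the pigeonhole principle produces a single word $z$, say of length $q \leq p$, such that $w$ contains factors of $z^\omega$ of arbitrarily large length.

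Next I would extract a right-infinite periodic word by a compactness (König's lemma) argument. Each occurrence in $w$ of a long factor of $z^\omega$ begins at one of only $q$ possible phases modulo the period, so infinitely many of these factors share a common phase $a_0$; they are then nested prefixes of the single right-infinite word $y = \sigma^{a_0}(z^\omega)$, which is purely periodic of period $q$. By construction every factor of $y$ occurs in $w$. The final step is to derive a contradiction from the coexistence of such a periodic $y$ with the aperiodicity of $w$. Here I would invoke minimality: since $w$ is uniformly recurrent, its shift-orbit closure $X_w$ is a minimal subshift (the standard equivalence between uniform recurrence and minimality), and since every factor of $y$ occurs in $w$, the orbit closure of $y$ is a nonempty, closed, shift-invariant subset of $X_w$. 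Minimality forces the two to coincide. But the orbit closure of the periodic word $y$ consists of only finitely many periodic points, so $X_w$ would contain only periodic points, contradicting aperiodicity of $w$ (as $w \in X_w$). This contradiction bounds the lengths and yields $K_0(p)$ and $K(p)$.

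I expect the main obstacle to be this last step, namely passing from ``all factors of a periodic word $y$ occur in $w$'' to ``$w$ is periodic.'' The cleanest route is the minimality argument above. A purely combinatorial alternative would use uniform recurrence to show that the period-$q$ pattern, recurring in $w$ with bounded gaps, must propagate to a global period of $w$; but making the alignment of the successive occurrences rigorous (so that their local periods agree into one global period, via a Fine--Wilf type overlap) is exactly the delicate point that the minimality argument sidesteps, which is why I would favor the latter.
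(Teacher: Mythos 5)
Your proof is correct, but it takes a noticeably different (and heavier) route than the paper's. Both arguments rest on the same two ingredients---aperiodicity and uniform recurrence of $s$ and of $T(s)$---and your reduction to a single general statement about aperiodic uniformly recurrent words is exactly how the paper frames it. Where you diverge is in how the contradiction is extracted. The paper's mechanism is a two-line combinatorial observation: a factor of period $q\le p$ contains at most $q\le p$ distinct factors of length $p$, whereas an aperiodic word has at least $p+1$ factors of length $p$ (Morse--Hedlund); so an unboundedly long periodic factor must omit some length-$p$ factor of $s$, directly violating the defining property of uniform recurrence at scale $\ell=p$. This gives $K_0(p)=L$ immediately, with no limit objects. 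You instead run a pigeonhole argument to isolate a single periodic pattern $z$, pass to the infinite periodic word $y=z^\omega$ by compactness, and invoke the equivalence of uniform recurrence with minimality of the orbit closure to force $X_w=\overline{O(y)}$ to be a finite set of periodic points, contradicting $w\in X_w$ being aperiodic. That argument is sound (your worry about the ``delicate'' Fine--Wilf alignment is indeed sidestepped by minimality), and it has the virtue of making the role of each hypothesis transparent, as your counterexample $0^{a_1}10^{a_2}1\cdots$ illustrates; but it imports topological dynamics where a counting argument suffices, and it produces the bound $K_0(p)$ less explicitly (via compactness rather than directly as the recurrence constant $L$ for length $p$).
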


\begin{proof}
The claim is an easy consequence of the fact that $s$, and hence
$T(s)$, is aperiodic but uniformly recurrent.  Recall that this means that for every
length $\ell$ there is another length $L$ such that every factor of
$s$ of length $L$ contains every factor of $s$ of length $\ell$.  If,
contrary to the claim, there were unboundedly large factors of $s$ of
period $p$, these factors would necessarily fail to contain some factor
of $s$ of length $p$.
\end{proof}

In the rest of the argument, we will often wish to indicate that
certain types of factors have lengths that are bounded by some
absolute constant depending only on $T$ and $s$.  We will abbreviate
this notion by saying that these factors are \emph{small}.

\begin{lemma}
	\label{dual_perm_small}
	In Equation~\eqref{std_dual_perm}, the words $u$ and $v$ are small.
\end{lemma}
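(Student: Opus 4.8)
The plan is to reduce the whole lemma to bounding the single quantity $d := |\rho_2|-|\rho_1|$, which equals $|p_2|-|p_1|$ since the two occurrences of $x$ have the same relative indices in $w_0$ and $w_1$. Indeed, the two expressions for $w_0$ in \eqref{std_dual_perm} exhibit two occurrences of the word $xu$ at distance $d$ inside a single factor of $T(s)$, so $T(s)$ contains a factor of period $d$ and length $d+|u|+1$; the same argument applied to $w_1$ gives a factor of period $d$ and length $d+|v|+1$. By Lemma~\ref{exponent} each of these lengths is at most $K(d)$, so once $d$ is bounded by a constant depending only on $T$ and $s$, both $|u|$ and $|v|$ are small. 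It therefore suffices to bound $d$.

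To do this I would argue by contradiction and suppose $d$ (equivalently $|u|$) is large. Then the period-$d$ factor coming from $w_0$ is a long periodic factor $P$ of $T(s)$, and the common prefix $w$ of $w_0$ and $w_1$—which by hypothesis reaches past the second occurrence of $x$—is also long. By Lemma~\ref{mosse} the morphism $T$ is recognizable, so by Lemma~\ref{marked_interpretation} the factor $w$ has a unique interpretation in $s$; in particular the two occurrences of $w$ at positions $i$ and $i'$ are aligned with the morphism in exactly the same way. Applying recognizability to $P$ forces its cutting points to be periodic with period $d$, so that $d=|T(Q)|$ for the period block $Q$, of some length $e$, of the corresponding preimage in $s$.

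The final step translates the two permutations of $w$ into two permutations of a factor of $s$, which Lemma~\ref{makarov} forbids. Writing $T(s)[i+\ell',\infty]=\sigma^{d}\bigl(T(s)[i+\ell,\infty]\bigr)$ (valid because $w[\ell,n-1]$ has period $d$), and using that the unique interpretation makes the partial morphism block at the start of each suffix identical, the comparison of these two suffixes reduces—since $T$ is order-preserving on infinite words—to the comparison of $s[j,\infty]$ with $s[j+e,\infty]$ inside the preimage factor $\tilde w$ of $w$. The flip between the occurrences $i$ and $i'$ then says precisely that the relative order of the positions $0$ and $e$ differs between two occurrences of $\tilde w$, i.e.\ that $\tilde w$ has two distinct permutations, contradicting Lemma~\ref{makarov}. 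Hence $d$ is bounded and $u,v$ are small.

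I expect the main obstacle to be this alignment step, and in particular the subcase in which $P$ is shorter than two full periods (the two copies of $xu$ in $w_0$ do not overlap), where recognizability cannot immediately be used to conclude $d=|T(Q)|$. In that regime I would instead exploit that $xu0$ and $xu1$ both occur, so $xu$ is right-special, and likewise $xv$; since for large lengths $T(s)$ has a unique right-special factor, $xv$ is simultaneously a prefix of $xu$ (they agree inside the common prefix $w$) and a suffix of $xu$ (by uniqueness of right-special factors), hence a border of $xu$. This produces the smaller period $q=|u|-|v|$ for $xu$, to which the recognizability-and-order-preserving argument above applies, bounding $q$ and therefore $|u|$ via Lemma~\ref{exponent}.
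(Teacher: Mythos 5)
Your main line of attack is the same as the paper's: a repetition bound via Lemma~\ref{exponent} when the two occurrences of $xu$ are close together, and a contradiction via unique interpretation (Lemma~\ref{marked_interpretation}), order-preservation, and Lemma~\ref{makarov} when everything is long. The reduction ``once $d$ is bounded, $|u|$ and $|v|$ are bounded'' is valid, and your alignment-plus-Makarov contradiction is essentially the paper's case $|P|\geq L$ (the paper aligns the two occurrences of $w$ via the longest common prefix $P$ of $w_0$ and $w_1$ together with $xu$; you align via $w$ together with $xu$, which comes to the same thing since $w$ already contains both $x$'s).

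The soft spot is the step ``applying recognizability to $P$ forces its cutting points to be periodic with period $d$, so that $d=|T(Q)|$.'' For recognizability to transport a cutting point from one copy of $xu$ to the other, each copy must contain a position at distance at least $\ell$ from both of its ends, i.e.\ you need $|xu|\geq L$; this has nothing to do with whether the two copies overlap. So the genuinely degenerate subcase is not ``the copies of $xu$ do not overlap'' but ``$|xu|<L$'' --- and there nothing remains to be proved, since $|v|<|u|<L$ already makes both words small. Consequently your closing claim that $d$ is bounded is not actually true (when $|xu|<L$ the distance between the two $x$'s can still be arbitrarily large), although the lemma's conclusion survives because $u$ and $v$ are small outright in that regime. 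Your proposed workaround for the ``non-overlapping'' subcase (the border $xv$ of $xu$ giving $xu$ period $|u|-|v|$) is therefore aimed at a non-problem, and as stated it is also incomplete: bounding $|u|-|v|$ by ``the recognizability-and-order-preserving argument above'' is not justified, since that argument derives its contradiction from the order flip between the suffixes at positions $\ell$ and $\ell'$, and no such flip is attached to the period $|u|-|v|$. The paper sidesteps all of this by dispatching $|xu|<L$ at the outset and then splitting on whether $|P|\geq L$; in the surviving case $|P|<L$ one automatically gets $d<|P|<L\leq|xu|$, so the two copies of $xu$ overlap and Lemma~\ref{exponent} finishes directly.
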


\begin{proof}
Let $P$ be the longest common prefix of $w_0$ and
$w_1$. The assumption $|u|>|v|$ implies that $P$ has suffix
$v$.   Let $L$ be the constant of Lemma~\ref{marked_interpretation}
(where the $x$ of the lemma is any aperiodic extension of $s$ to a
bi-infinite word).  If $|xu|<L$ then $u$ and hence $v$ are small and
we are done, so suppose instead that $|xu| \geq L$.

Suppose first that $|P| \geq L$.  Then by
Lemma~\ref{marked_interpretation}, the words $xu$ and $P$ each have
unique interpretations in $s$.  Let $(\pi,z,\sigma)$ be the
interpretation of $P$ in $s$.  Then there exist positions
$I, J$ in $z$ such that the two $x$'s in $w_0$ and $w_1$ occur in
$T(z[I])$ and $T(z[J])$.  Furthermore, by the uniqueness of the
interpretations of $xu$ in $s$ we have $z[I]=z[J]$ and the $x$'s occur
at the same positions of $T(z[I])$ and $T(z[J])$.  Recalling that $T$
is order-preserving on infinite words, we see that by
Lemma~\ref{makarov}, the relative
orders of both pairs $(T(s)[i+\ell,\infty], T(s)[i+\ell',\infty])$ and
$(T(s)[i'+\ell,\infty], T(s)[i'+\ell',\infty])$ are determined by the
factor $z$ of $s$.  This contradicts the assumption that these two
pairs of infinite words have opposite relative orders.

Now suppose that $|P|<L$.  Since $|xu| \geq L > |P|$ and $P$ contains
both occurrences of $x$, the two occurrences of $xu$ in $w_0$ must
overlap.  Let $Q$ be the factor of $w_0$ consisting of exactly these
two overlapping occurrences of $xu$.  The word $Q$ has a period which
is at most the distance between the two $x$'s, and since $P$ contains both
$x$'s, this period is therefore at most $|P|$.  Then by
Lemma~\ref{exponent} we have $|Q| \leq K(|P|)$, and so \textit{a
  fortiori}, we have $|u| \leq K(|P|)$.

In both cases, we get an upper bound on the lengths of both $u$ and
$v$ that depends only on $T$ and $s$.
\end{proof}

In the next lemma, by \emph{minimal} we mean that no proper factor has
two permutations.

\begin{lemma}
	\label{min_dual_perm}
	Minimal factors of $T(s)$ with two permutations are small.
\end{lemma}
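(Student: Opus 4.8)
The plan is to show that a minimal factor with two permutations cannot be much longer than the words $u$ and $v$ appearing in Equation~\eqref{std_dual_perm}, which are already known to be small by Lemma~\ref{dual_perm_small}. The strategy is to argue that if a factor $w$ of length $n$ has two permutations, then some \emph{proper} factor of $w$ already has two permutations unless $n$ is bounded by a constant depending only on $T$ and $s$.

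First I would recall the setup of Equation~\eqref{std_dual_perm}: since $w$ has two permutations there exist factors $w_0$ and $w_1$, both extending $w$, such that the pairs of shifts starting at the two $x$'s have opposite relative orders in $w_0$ versus $w_1$. The key observation is that the \emph{comparison} of the two shifts is decided at the first position where they disagree. In $w_0$ this disagreement is witnessed by the pattern $\cdots x u 0 \cdots$ versus $\cdots x u 1 \cdots$, and similarly in $w_1$ by $x v 1$ versus $x v 0$; by Lemma~\ref{dual_perm_small} both $u$ and $v$ are small, so the positions at which the two shifts first differ lie within a bounded distance of the two occurrences of $x$.

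Next I would argue that the factor $w$ need only be long enough to contain the two occurrences of $x$ together with the disambiguating suffixes $u0/u1$ and $v1/v0$. More precisely, the permutation of $w$ is determined by the relative orders of the shifts beginning at each position of $w$, and for the two competing orderings to be realized it suffices that $w$ extend just past the second $x$ by $\max\{|u|,|v|\}+1$ letters. Let $W$ denote the shortest prefix of (say) $w_0$ that contains the second $x$ together with the subsequent $u0$; by the preceding paragraph $|W|$ is bounded by a constant. I would then show that this truncated factor $W$ (or the corresponding truncation of $w$) \emph{already} has two permutations: the two occurrences exhibited by $w_0$ and $w_1$ restrict to two occurrences of $W$ whose internal shift-comparisons still have opposite outcomes, because those outcomes were decided within $W$. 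Hence any factor longer than this bounded length has a proper factor with two permutations, so a \emph{minimal} such factor has length at most this constant, i.e.\ is small.

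The main obstacle I anticipate is making rigorous the claim that truncating to $W$ preserves the two conflicting permutations. The subtlety is that a permutation of a factor depends on the comparisons between \emph{all} pairs of its shifts, not merely the single pair at positions $\ell,\ell'$ that originally witnessed the two permutations; truncation could in principle change the outcome of some other pair of comparisons and thereby collapse two permutations into one, or it could introduce a spurious conflict. To handle this I would argue that, by Lemma~\ref{makarov} together with the unique-interpretation machinery of Lemma~\ref{marked_interpretation}, every comparison between shifts of $W$ that is \emph{not} the distinguished one is in fact resolved identically in the two occurrences (these comparisons are governed by the common underlying Sturmian factor and by the order-preserving property of $T$), so the only difference between the two permutations of $W$ is precisely the reversal at the distinguished pair. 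This ensures $W$ has exactly two permutations, completing the reduction to the small word $W$.
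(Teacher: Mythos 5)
There is a genuine gap. Your whole argument rests on the claim that the witness factor $W$ --- which must contain \emph{both} occurrences of $x$ (the positions $\ell$ and $\ell'$ at which the order conflict happens) together with their disambiguating suffixes --- has bounded length. But Lemma~\ref{dual_perm_small} only bounds $|u|$ and $|v|$; it says nothing about the distance $|\rho_2|-|\rho_1|=|\gamma|$ between the two $x$'s. Indeed, by minimality a minimal factor $w$ with two permutations is exactly the block stretching from the first $x$ to the second $x$, so bounding $|w|$ is \emph{equivalent} to bounding that gap --- that is the actual content of the lemma, and it cannot be read off from the earlier results as you assert (``by the preceding paragraph $|W|$ is bounded by a constant'' is the unjustified step; the preceding paragraph only controls the distance from each $x$ to the position where the two shifts first differ, not the distance between the two $x$'s). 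Consequently your $W$ is not small, and no proper factor of $w$ containing both conflicting positions exists, so your truncation argument produces nothing.

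The paper closes exactly this gap with uniform recurrence, which your proposal never invokes. After using minimality to place the first $x$ at position $0$ (so $w_0=xu0\gamma=\rho_2xu1$), the word $xu0$ is small by Lemma~\ref{dual_perm_small}; hence if $w$ were large, uniform recurrence of $T(s)$ would force a second occurrence of $xu0$ inside $w$ at some position $j\geq 1$. Since $v1$ is a prefix of $u$, that occurrence also begins with $xv1$, so in the occurrence corresponding to $w_0$ the shifts at $j$ and $|\rho_2|$ begin with $xu0$ and $xu1$, while in the occurrence corresponding to $w_1$ they begin with $xv1$ and $xv0$: a new order conflict between two positions both lying strictly inside $w$, i.e.\ a proper factor of $w$ with two permutations, contradicting minimality. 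As a side remark, the difficulty you flag at the end --- that truncation might ``collapse'' the two permutations --- is not a real issue: two subpermutations are already distinct as soon as a single pair of positions compares differently, so no appeal to Lemma~\ref{makarov} or to unique interpretations is needed for that point.
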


\begin{proof}
Let $w$ be a minimal factor of $T(s)$ with two permutations.
Let $w_0$ and $w_1$ be as in Equation~\eqref{std_dual_perm} with
$|u|>|v|$.  By the minimality of $w$ we may suppose that $w_0$ and
$w_1$ begin with the first occurrence of $xu0$. 

By Lemma~\ref{dual_perm_small}, the word $u$ is small.  If $w$ is
sufficiently large, then (by the uniform recurrence of $T(s)$) it contains
a second occurrence of $xu0$ (which begins with $xv1$), which
contradicts the minimality of $w$.
\end{proof}

Next, we handle factors with more than two permutations.

\begin{lemma}
	\label{three_perms}
	Factors of $T(s)$ with more than two permutations are small.
\end{lemma}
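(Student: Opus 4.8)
The plan is to show that if a factor $w$ of $T(s)$ has three or more distinct permutations, then $w$ must be small. My strategy is to leverage the structure already developed in Equation~\eqref{std_dual_perm}, which captures how a single factor acquires multiple permutations through pairs of distinguishing positions. Having three permutations means there are (at least) three pairwise-distinct orderings among the shifts at various positions within occurrences of $w$. I would extract from this the existence of multiple "witnessing pairs" of positions $(\ell, \ell')$ at which the relative order of the suffixes disagrees between different occurrences of $w$. Each such disagreement, by the analysis in Lemma~\ref{dual_perm_small}, forces a small associated factor $u$ (or $v$).

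First I would set up the combinatorics of three permutations precisely. If $w$ has permutations $\pi_1, \pi_2, \pi_3$, then there exist occurrences realizing each, and since the permutations are distinct there must be positions where the induced orders differ pairwise. The key observation I expect to exploit is that the letter $T(s)[i+\ell]$ at a distinguishing position is forced (as noted before Lemma~\ref{makarov}, it equals a fixed letter $x$), so each disagreement yields an instance of the configuration in Equation~\eqref{std_dual_perm}. I would then apply Lemma~\ref{dual_perm_small} to each configuration to conclude that the relevant words $u$ and $v$ separating the repeated letter $x$ from its distinguishing continuation are all small, bounded by a constant depending only on $T$ and $s$.

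The main step, and the one I expect to be the central obstacle, is the following counting argument: with only two distinct suffix-shifts available beyond a small prefix (essentially because the Sturmian word $s$ has exactly one right special factor of each length, so the "branching" that produces differing orders is highly constrained), there is simply not enough room for three genuinely different orderings to coexist on a long factor. More precisely, I would argue that three permutations would require at least two independent distinguishing pairs whose associated small factors are nested or overlapping in a way that, combined with the uniform recurrence of $T(s)$ (Lemma~\ref{exponent} controlling periods), forces a repetition inside $w$. That repetition, via the unique-interpretation machinery of Lemma~\ref{marked_interpretation} and the order-preservation of $T$, would then collapse two of the three supposed permutations into one, contradicting their distinctness. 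The delicate point is bookkeeping the positions of the $x$'s and the small blocks $u$, $v$ across all three occurrences simultaneously so that the overlap is genuine.

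Concretely, I would proceed as follows. Assume $w$ is large and has at least three permutations; pick occurrences at positions $i_1, i_2, i_3$ realizing distinct permutations. For each pair of occurrences, the disagreement in relative order of shifts produces, as in Equation~\eqref{std_dual_perm}, a configuration governed by a small word ($u$ or $v$); by Lemma~\ref{dual_perm_small} these words are small. Since $w$ is long but all these distinguishing blocks are small and anchored near their respective repeated letters $x$, two of the three distinguishing configurations must share the same pair of positions $(\ell, \ell')$ up to a bounded shift. Using uniform recurrence, a sufficiently long $w$ contains a second occurrence of the small distinguishing block, and by Lemma~\ref{makarov} together with order-preservation the corresponding suffix-orders are then determined by a single factor of $s$ of bounded length, forcing two of the permutations to agree. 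This contradiction bounds $|w|$, so $w$ is small.
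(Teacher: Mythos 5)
Your setup is consistent with the paper's: each pair of occurrences realizing distinct permutations yields a configuration as in Equation~\eqref{std_dual_perm}, and Lemma~\ref{dual_perm_small} makes the associated words $u$ and $v$ small. But the step you yourself flag as the crux is a genuine gap. The assertion that ``two of the three distinguishing configurations must share the same pair of positions $(\ell,\ell')$ up to a bounded shift'' is not justified and is not obviously true --- the distinguishing positions for the pair $(w_0,w_1)$ and for the pair $(w_0,w_2)$ can a priori sit anywhere in $w$, and nothing in your counting argument pins them down. The follow-up claim, that uniform recurrence plus Lemma~\ref{makarov} and order-preservation would then ``collapse'' two of the three permutations, is also not a deduction you have established: Lemma~\ref{makarov} and the interpretation machinery are what prove Lemma~\ref{dual_perm_small} in the first place, and re-invoking them on ``a second occurrence of the small distinguishing block'' does not by itself identify two of the permutations.

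The mechanism that actually closes the argument is different and more specific. From the pair $(w_0,w_1)$ one extracts the common prefix $P_0$ ending at the second $xv$; since $v1$ is a prefix of $u$, both $P_00$ and $P_01$ occur, so $P_0$ is \emph{right special in $T(s)$}, and $|P_0|-|w|$ is small by Lemma~\ref{dual_perm_small}. The pair $(w_0,w_2)$ likewise yields a right special prefix $P_1$ of $w_0$ with $|P_1|-|w|$ small, and after relabeling one may assume $|P_0|<|P_1|$. Since $T(s)$ has exactly one right special factor of each sufficiently large length (note: the relevant uniqueness is in $T(s)$, not in $s$ as you write), $P_0$ is both a prefix and a suffix of $P_1$, so $P_1$ has period $|P_1|-|P_0|$, which is small; Lemma~\ref{exponent} then bounds $|P_1|$ and hence $|w|$. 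You gesture at the unique-right-special-factor constraint as the reason ``there is not enough room'' for three orderings, which is the right intuition, but you never extract the two right special prefixes of different lengths or run the period argument, and without that the proof does not go through.
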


\begin{proof}
	Suppose $w$ has three permutations in $T(s)$. Let
        $w_0,w_1,w_2$ be minimal length factors of $T(s)$ with prefix $w$
        extended far enough to the right for the permutations of $w$
        to be determined. Assume also that $w$ has a different
        permutation in each. Suppose further that the longest common
        prefix of $w_0$ and $w_1$ is shorter than the longest common
        prefix of $w_0$ and $w_2$.  As in Equation \ref{std_dual_perm}, we may write 	
	\begin{align*}
w_0 &= \rho_1 x u 0 \gamma = \rho_2 x u 1\\
w_1 &= p_1  x v 1 g = p_2 x v 0, 
	\end{align*} where $|u|>|v|$. Thus
        $v1$ is a prefix of $u$, and thus the common prefix of $w_0,
        w_1$ ending in the second $xv$ (call this $P_0$) is right
        special. By Lemma \ref{dual_perm_small}, $|P_0|-|w|$ is small.
        Similarly, since $w$ has different permutations in $w_0$ and
        $w_2$, these two words have a common right special prefix
        $P_1$, where again $|P_1|-|w|$ is small.  Now if $w$ is
        large, then so are $P_0$ and $P_1$, and hence $T(s)$ contains
        exactly one right special factor of length $|P_0|$.
        Consequently, the suffix of $P_1$ of length $|P_0|$ is in fact
        equal to $P_0$.  Since $|P_0|-|w|$ and $|P_1|-|w|$ are both small, the quantity
        $|P_1|-|P_0|$ is also small.  It follows that $P_1$ has period
        $|P_1|-|P_0|$, and therefore, by Lemma~\ref{exponent}, we have
        $|P_1| \leq K(|P_1|-|P_0|)$, as required.
\end{proof}

\begin{lemma}
	\label{nondecreasing_dual_factors}
	If $T(s)$ has $k$ factors of length $n$ with two permutations
        for $n$ sufficiently large, then $T(s)$ has at least $k$
        factors of length $n+1$ with two permutations.
\end{lemma}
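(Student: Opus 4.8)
The plan is to build an injection from the set $D_n$ of length-$n$ factors of $T(s)$ with two permutations into the set $D_{n+1}$ of length-$(n+1)$ factors with two permutations, realized by extending each factor \emph{one letter to the left}. The guiding observation is that the permutation of an occurrence of a factor $w$ starting at position $i$ depends only on the forward tail $T(s)[i,\infty]$, since it records the relative orders of the suffixes $T(s)[i+c,\infty]$ with $0\le c\le n-1$. Consequently, prepending a letter $a$ does not disturb these relative orders: if an occurrence of $aw$ starts at position $i-1$, then the induced permutation $\pi_{T(s)}[i-1,i+n-1]$, restricted to its last $n$ positions, is exactly the permutation of the corresponding occurrence of $w$ at $i$. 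Throughout I use that for $n$ large every factor has one or two permutations (Lemma~\ref{three_perms}), so the two permutations of any $w\in D_n$ are a well-defined pair $\pi_1\ne\pi_2$.

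First I would handle the factors $w\in D_n$ that are not left special. Such a $w$ is preceded by a unique letter $a$, so every occurrence of $w$ is an occurrence of $aw$; since $w$ realizes both $\pi_1$ and $\pi_2$, the restriction observation shows $aw$ realizes two permutations as well, so $aw\in D_{n+1}$. Recall that for $n$ large there is a unique left special factor of length $n$, so at most one element of $D_n$ is left special, and the assignment just described covers all the others. This assignment is injective because $w$ is recovered as the length-$n$ suffix of its image; since distinct factors have distinct length-$n$ suffixes, distinct preimages yield distinct images (this will remain true once the left special factor is incorporated, as its image also has length-$n$ suffix equal to itself).

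The main obstacle is the unique left special factor $\lambda$ of length $n$ in the case $\lambda\in D_n$: here $\lambda$ is preceded by both $0$ and $1$, and I must show that $0\lambda$ or $1\lambda$ still lies in $D_{n+1}$. Suppose neither does. Then by the restriction observation the preceding letter determines the permutation of $\lambda$, say preceder $0$ forces $\pi_1$ and preceder $1$ forces $\pi_2$. On the other hand, by Lemma~\ref{dual_perm_small} the data distinguishing $\pi_1$ from $\pi_2$ is resolved within a bounded distance past $\lambda$, so there is an absolute constant $C$ (depending only on $T$ and $s$) such that the permutation of $\lambda$ is already determined by any length-$(n+C)$ factor having $\lambda$ as a prefix. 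For $n$ large, the unique left special factor $\Lambda$ of length $n+C$ has $\lambda$ as a prefix (prefixes of left special factors are left special, so left special factors are nested by prefix), is preceded by both $0$ and $1$, and—being long enough—assigns a single permutation, $\pi_1$ or $\pi_2$, to its prefix $\lambda$. The occurrence of $\Lambda$ preceded by the ``wrong'' letter then exhibits an occurrence of $\lambda$ whose preceding letter and permutation violate the dichotomy above, a contradiction. Hence $0\lambda$ or $1\lambda$ belongs to $D_{n+1}$; choosing it as the image of $\lambda$ completes the injection. Combining the two cases gives $|D_{n+1}|\ge|D_n|$, which is the assertion of the lemma. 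I expect the left special case to be the delicate point, precisely because it is the only place where the preceding letter is not forced and one must rule out the degenerate possibility that the preceder, rather than the forward tail, controls the permutation.
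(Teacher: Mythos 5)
Your argument is correct, but it takes a genuinely different route from the paper's. Both proofs realize the count via the left-extension map $w \mapsto aw$ (injective because $w$ is the length-$n$ suffix of its image), and both must exclude the degenerate situation in which the two permutations of $w$ are segregated by the preceding letter; the difference lies in how that exclusion is done. The paper does not split on left-specialness: for every $w$ it passes to the right special prefix $P_0$ of the extended occurrences in Equation~\eqref{std_dual_perm}, shows that in the bad case a slightly longer word $P_01y$ is again right special, and derives a contradiction from the uniqueness of right special factors together with the periodicity bound of Lemma~\ref{exponent}. You instead observe that only the unique left special factor $\lambda$ of length $n$ can be problematic, and dispose of it using (i) the fact that the permutation of an occurrence of $\lambda$ is determined by its right-extension to length $n+C$ for an absolute constant $C$, and (ii) the fact that the unique left special factor of length $n+C$ has $\lambda$ as a prefix and is preceded by both letters, hence forces occurrences of $\lambda$ with both preceders but a single permutation. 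Step (i), which you state rather tersely, does follow from Lemma~\ref{dual_perm_small}: the distinguishing comparison in Equation~\eqref{std_dual_perm} is resolved within $|u|+1$ letters past the end of $w$, and $|u|$ is small; one should also note (via uniform recurrence) that each permutation of $\lambda$ is realized at some occurrence of positive index, so the preceding letter is always defined. The trade-off is that your version localizes the difficulty to a single factor per length and avoids Lemma~\ref{exponent} in this step, while the paper's version treats all factors of $D_n$ uniformly at the price of invoking the periodicity machinery; both rest ultimately on Lemma~\ref{dual_perm_small} and the uniqueness of special factors.
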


\begin{proof}
Suppose that $n$ is sufficiently large that $T(s)$ has exactly one
right special factor of each length for lengths $n$ and larger.  Let
$w$ be a factor of length $n$ with two permutations in $T(s)$. Note
that we can uniquely extend $w$ to the right until the result becomes
a right special factor of $T(s)$. As in Equation~\ref{std_dual_perm},
write 
	\begin{align*}
w_0 &= \rho_1 x v1q 0 \gamma = \rho_2 x v1q 1\\
w_1 &= p_1  x v 1 g = p_2 x v 0,
	\end{align*} where $v1q = u$. Let $P_0$ be the common prefix
        of $w_0,w_1$ ending with the second $v$. Note that $P_0$ is right
        special. If both $aP_01q1$ and $aP_00$ occur in $T(s)$ for
        $a=0$ or $a=1$, then $aw$ has two permutations. Assume this is
        not the case. Let $a$ be such that $aP_0$ is right special
        (such an $a$ exists, since otherwise there would be two right
        special factors of length $|P_0|$). Then any occurrence of
        $aP_01$ is not followed by $q1$. Hence, $P_01y$ is right
        special for $y$ some prefix of $q1$. By
        Lemma~\ref{dual_perm_small}, $v$ and $q$ (and thus $y$) are
        small. Set $P_1 = P_01y$ and apply the same argument as in the end
        of the proof of Lemma \ref{three_perms}.  We find that $|P_1|
        \leq K(|P_1|-|P_0|)$, contradicting the assumption that $w$ is
        large.
\end{proof}

\begin{lemma}
	\label{non_inc}
	If $T(s)$ has $k$ factors of length $n$ with two permutations
        for $n$ sufficiently large, then $T(s)$ has at most $k$
        factors of length $n+1$ with two permutations.
\end{lemma}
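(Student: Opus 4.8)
The plan is to mirror the proof of Lemma~\ref{nondecreasing_dual_factors}, but in the reverse direction: I would exhibit an injection from the set of length-$(n+1)$ factors with two permutations into the set of length-$n$ factors with two permutations. Together with Lemma~\ref{nondecreasing_dual_factors} this pins the two counts to be equal, which is exactly what is wanted for the conclusion that the number of factors with two permutations is eventually constant. The candidate injection sends a factor $w'$ of length $n+1$ with two permutations to the factor $\tilde w$ of length $n$ obtained by deleting its \emph{first} letter (dual to the prepending used in Lemma~\ref{nondecreasing_dual_factors}).

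The first thing to establish is that $\tilde w$ again has two permutations. Writing $w'$ in the form of Equation~\eqref{std_dual_perm}, let the two distinguished occurrences of $x$ sit at positions $\ell<\ell'$. Because the two shifts being compared read exactly the letters of $w'$ until $w'$ is exhausted, their first disagreement must lie strictly beyond $w'$: otherwise the comparison would be forced by $w'$ itself and would be identical in every occurrence, precluding a flip. Since $u$ and $v$ are small by Lemma~\ref{dual_perm_small}, this already forces the right-hand position $\ell'$ to lie within a bounded distance of the right end of $w'$. Passing to the right special factor $P_0$ associated to the flip (as in Lemma~\ref{nondecreasing_dual_factors}, with $|P_0|-|w'|$ small), deleting the first letter of $P_0$ yields again a right special factor of length one less, and the flip descends to $\tilde w$ \emph{provided} the left-hand distinguished position $\ell$ is not the very first position of $w'$.

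The main obstacle is precisely the case $\ell=0$, where the outer shift $T(s)[i,\infty]$ is one of the two shifts being compared and deleting the first letter destroys this particular flip. Here I would argue as at the ends of the proofs of Lemmas~\ref{three_perms} and~\ref{nondecreasing_dual_factors}: the agreement of the two shifts up to the right end of $w'$ forces $w'$ to carry a period equal to $\ell'$, and, using the uniqueness of the right special factor of each large length to control this period, Lemma~\ref{exponent} would then bound $|w'|$, contradicting the assumption that $n$ is large. Thus for $n$ large no such factor occurs and $\tilde w$ always has two permutations, so the deletion map lands in the length-$n$ factors with two permutations.

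Finally I would verify injectivity. Two distinct length-$(n+1)$ factors with the same image $\tilde w$ must be $0\tilde w$ and $1\tilde w$, so $\tilde w$ is the (unique, for $n$ large) left special factor of length $n$. If both $0\tilde w$ and $1\tilde w$ had two permutations, then, since the distinguishing data of each sits near the shared right end, extending each of them to the right to its right special factor would---because $\tilde w$ is left special, so the continuation is forced by the shared suffix---append the same letters to both, producing two right special factors of the same length that differ only in their first letter, contradicting the uniqueness of the right special factor of that length. Hence the map is injective and $T(s)$ has at most $k$ factors of length $n+1$ with two permutations. The two delicate points, which I expect to be the real work, are the localization of the distinguishing positions near the right end and the treatment of the bispecial case, where $\tilde w$ is simultaneously left and right special and the ``forced continuation'' argument for injectivity must be handled separately.
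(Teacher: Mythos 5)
Your overall strategy coincides with the paper's (delete the first letter, show the length-$n$ suffix still has two permutations, and observe that injectivity can only fail when the suffix is left special), and your localization of the second distinguished position near the right end of $w'$ is correct. But both of the steps you yourself flag as ``the real work'' have genuine gaps, and in each case the needed repair is not the one you sketch. In the case $\ell=0$, the period you extract from the agreement of the two shifts is $\ell'-\ell=\ell'$, the distance between the two distinguished positions. You have just argued that $\ell'$ lies within a bounded distance of the \emph{right} end of $w'$, so $\ell'\geq n-O(1)$ is large, and Lemma~\ref{exponent} yields only the vacuous bound $|w'|\leq K(\ell')$; the appeal to ``uniqueness of the right special factor to control this period'' does not repair this. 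The paper's argument is different: when $a=x$ sits at position $0$, the word $xu0$ is small, so uniform recurrence forces a second occurrence of $xu0$ (which begins with $xv1$) inside the suffix $w$ once $n$ is large, and that second occurrence transfers the two-permutation structure to $w$ itself, exactly as in the proof of Lemma~\ref{min_dual_perm}.

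The injectivity step has a similar problem. Your claim that extending $0\tilde w$ and $1\tilde w$ rightward to right special factors ``appends the same letters to both'' is only forced while the extension of $\tilde w$ is unique, i.e.\ until the first right special extension $R$ of $\tilde w$ is reached; beyond $R$ the words $0R$ and $1R$ may continue differently, so the two right special prefixes $aP_0$ and $bP_1$ may have different lengths and no contradiction with uniqueness of right special factors arises. This failure is not confined to the bispecial case you set aside. The paper handles $|P_1|>|P_0|$ by noting that $P_0$ is then a suffix of $P_1$ while both have $w$ as a prefix, so the prefix $P_1'$ of $P_1$ of length $|P_1|-|P_0|+|w|$ begins and ends with $w$ and carries the \emph{small} period $|P_1|-|P_0|$; Lemma~\ref{exponent} (now with a genuinely small period) bounds $|P_1'|$, contradicting the assumption that $|w|$ is large. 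Without an argument of this kind your map is not shown to be injective, so the proposal as it stands does not establish the lemma.
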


\begin{proof}
  Suppose that $n$ is sufficiently large that $T(s)$ has exactly one
  right special factor of each length for lengths $n$ and larger.  If
  $T(s)$ has no factors with two permutations the result is trivial,
  so assume otherwise. Let $aw$ be a factor of $T(s)$ of length $n+1$
  with two permutations where $|a|=1$. If $w$ does not have two
  permutations, then the same argument as in the proof of
  Lemma~\ref{min_dual_perm} applied to $aw$. where $a$ necessarily
  plays the role of $x$ (since otherwise, if the $x$'s were contained
  in $w$, then $w$ would have two permutations), shows that in this
  case $aw$ is small, which is a contradiction.  So in fact $w$ does
  have two permutations.

This shows that there are at most $k$ factors of length $n+1$ with two
permutations except in one particular circumstance: $w$ is left
special and both $aw$ and $bw$ have two permutations, where $a$ and
$b$ are different letters.  Write
	\begin{align*}
w_0 &= a\rho_1 x vcq d \gamma = \rho_2 x vcq c\\
w_1 &= ap_1  x v c g = p_2 x v d,
	\end{align*} 
where $x \in \{0,1\}$, $c$ and $d$ are different letters, and the
relative positions of the $x$'s in $w_0$ and $w_1$ are the same.
Similarly, write
	\begin{align*}
w_0' &= b\rho_1' x' v'c'q' d' \gamma' = \rho_2' x' v'c'q' c'\\
w_1' &= bp_1'  x' v' c' g' = p_2' x' v' d',
	\end{align*} 
where $x' \in \{0,1\}$, $c'$ and $d'$ are different letters, and the
relative positions of the $x'$'s in $w_0'$ and $w_1'$ are the same.

Let $aP_0$ (resp.\ $bP_1)$
be the longest common prefix of $w_0$ and $w_1$ (resp.\ $w_0'$ and
$w_1'$).  Then $aP_0$ ends with the second $xv$ and $bP_1$ ends with
the second $x'v'$.   Furthermore, both $P_0$ and $P_1$ are right
special and both have $w$ as a prefix.  If $|P_0|=|P_1|$, then $aP_0$
and $bP_1$ are distinct right special factors of the same length,
which is a contradiction.  So suppose that $|P_1| > |P_0|$.  As in the
end of the proof Lemma~\ref{three_perms}, we argue that since $T(s)$ only
contains one right special factor of length $|P_0|$, the suffix of
$P_1$ is equal to $P_0$.  However, unlike in Lemma~\ref{three_perms},
we cannot say that $P_0$ is also a prefix of $P_1$.  So let $P_1'$ be
the prefix of $P_1$ of length $|P_1|-|P_0|+|w|$.  Then $P_1'$ begins
and ends with $w$.  Consequently, $P_1'$ has period $|P_1'|-|w|$.  By
Lemma~\ref{dual_perm_small}, the quantity $|P_1|-|w|$ and hence
$|P_1'|-|w|$ is small.  Applying Lemma~\ref{exponent}, we find that
$|P_1'| \leq K(|P_1'|-|w|)$, which is again a contradiction.
\end{proof}

This completes the proof of Theorem~\ref{main_thm}.

\section{Effect of an arbitrary aperiodic morphism on the order}
\label{sec:order_preserving}
Clearly, one would like to show that Theorem~\ref{main_thm} holds
without the assumption that $T$ is marked.  There are two
difficulties: first, we need to establish that $T$ always preserves
(or reverses) the order on infinite words; and second, we need
recognizability properties similar to the marked case.  The latter
issue seems difficult to resolve, but we can establish the
necessary properties regarding the order.

Richomme \cite[Lemma~3.13]{Ric03} characterized the
order-preserving binary morphisms.

\begin{lemma}[\cite{Ric03}]\label{order-preserving}
Let $T : \{0,1\}^* \to \{0,1\}^*$ be a non-empty morphism.  Then $T$ is
order-preserving if and only if $T(01) < T(1)$.
\end{lemma}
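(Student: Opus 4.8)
The plan is to prove both implications, writing $A = T(0)$ and $B = T(1)$ (both non-empty, since $T$ is a non-empty morphism). The forward direction is immediate: since $0 < 1$ we have $01 < 1$, so order-preservation gives $T(01) \leq T(1)$, i.e.\ $AB \leq B$; as $|AB| = |A| + |B| > |B|$ the words $AB$ and $B$ cannot be equal, hence $AB < B$. All the work is in the converse.

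For the converse I would assume $AB < B$ and prove that $u \leq v$ implies $T(u) \leq T(v)$ by induction on $|u|+|v|$. If $u$ is empty the claim is trivial, and if $u$ and $v$ share a common first letter $c$, then writing $u = cu'$, $v = cv'$ we have $u' \leq v'$ and $T(u) = T(c)T(u')$, $T(v) = T(c)T(v')$, so the claim reduces to the smaller instance $T(u') \leq T(v')$ (prepending the fixed word $T(c)$ preserves order). The only remaining situation is that $u$ and $v$ begin with different letters; since $u \leq v$ this forces $u = 0u'$ and $v = 1v'$, and the goal becomes the single key inequality $A\,T(u') \leq B\,T(v')$ for arbitrary words $u', v'$. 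This reduction is routine; the heart of the matter is establishing this one inequality from the hypothesis $AB < B$.

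To prove $A\,T(y) \leq B\,T(z)$ for all $y, z$, I would first record that $AB < B$ rules out several configurations: it is incompatible with $A = B$ and with $B$ being a prefix of $A$ (in both cases $B \leq AB$). Hence exactly two cases survive. In the first, $A$ and $B$ are incomparable and their first disagreement has $A$ carrying a $0$ where $B$ carries a $1$; then any word beginning with $A$ is lexicographically below any word beginning with $B$, so $A\,T(y) < B\,T(z)$ at once. \textbf{The main obstacle is the second case}, where $A$ is a proper prefix of $B$, say $B = AB'$ with $B'$ non-empty; here the hypothesis $AB < B$ is equivalent, after cancelling the common prefix $A$, to the sharper statement $AB' < B'$. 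Since $|AB'| > |B'|$ and $AB' < B'$, the word $B'$ cannot be a prefix of $AB'$, so $AB'$ and $B'$ disagree within the first $|B'|$ positions with $AB'$ the smaller; consequently every word beginning with $AB'$ lies strictly below $B'$.

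Using this observation, I would show $T(y) \leq B'$ for every word $y$ by induction on $|y|$, which suffices because it yields $A\,T(y) \leq AB' = B \leq B\,T(z)$. The base case $y = \varepsilon$ is clear. For $y = 0y''$ we have $T(y) = A\,T(y'')$, and induction gives $T(y'') \leq B'$, whence $A\,T(y'') \leq AB' < B'$ by prepending $A$ and then invoking $AB' < B'$. For $y = 1y''$ we have $T(y) = B\,T(y'') = AB'\,T(y'')$, a word beginning with $AB'$, which is strictly below $B'$ by the observation above. This closes the induction and hence the converse. The genuinely delicate point is precisely this prefix case: one must resist trying to prove $A w \leq B$ for \emph{all} words $w$ (which is false, e.g.\ $w = 11$ when $A = 0$, $B = 01$, giving $Aw = 011 > 01 = B$) and instead exploit that the relevant $w$ are \emph{images} under $T$, which is exactly what the induction on $|y|$ encodes.
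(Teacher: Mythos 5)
Your proof is correct. Note that the paper itself gives no argument for this lemma; it is imported verbatim from Richomme's paper on Lyndon morphisms, so there is no in-paper proof to compare against. Your reconstruction is sound: the forward direction is the one-line observation you give (it silently uses that $T$ is non-erasing to get $AB\neq B$, which is the natural reading of ``non-empty morphism'' and certainly holds for the marked morphisms the paper applies the lemma to), and the converse correctly reduces, via the standard cancellation/prepending properties of the lexicographic order, to the single inequality $A\,T(y)\leq B\,T(z)$. Your case split is exhaustive --- $AB<B$ kills the cases where $B$ is a prefix of $A$ and where the first disagreement favours $B$ --- and the delicate case, $B=AB'$ with $B'$ nonempty, is handled properly: cancelling $A$ gives $AB'<B'$, the length comparison forces a genuine disagreement inside the first $|B'|$ letters, and the auxiliary induction showing $T(y)\leq B'$ for every word $y$ is exactly the right invariant (your closing remark correctly identifies why the naive strengthening $Aw\leq B$ for arbitrary $w$ fails and why restricting to images of $T$ saves the induction). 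This closes the lemma.
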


Let $u$ and $v$ be infinite words.  A morphism $T$ is
\emph{order-reversing on infinite words} if whenever $u < v$
we have $T(u) > T(v)$.

\begin{lemma}\label{order-reversing}
Let $T : \{0,1\}^* \to \{0,1\}^*$ be a morphism such that
$T(1)$ is not a prefix of $T(01)$.  Suppose that $T(01) > T(1)$.  Then
$T$ is order-reversing on infinite words.
\end{lemma}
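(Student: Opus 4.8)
The plan is to reduce the order-reversing property to a single lexicographic comparison between shifted images, and then to split into cases according to the prefix relationship between $a:=T(0)$ and $b:=T(1)$. Given infinite words $u<v$, let $w$ be their longest common prefix, so that $u=w0u'$ and $v=w1v'$ for infinite words $u',v'$. Cancelling the common image $T(w)$, the inequality $T(u)>T(v)$ is equivalent to $a\,T(u')>b\,T(v')$. Hence it suffices to prove that $a\alpha>b\beta$ whenever $\alpha=T(u')$ and $\beta=T(v')$ are images of infinite words. The only structural feature of such $\alpha,\beta$ that I will need is that each begins with $a$, since both $a$ and $b$ begin with the first letter of $a$ (and in the hard case I will need slightly more).

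First I would dispose of the easy cases by comparing $a$ and $b$ as finite words. If neither of $a,b$ is a prefix of the other, let $i<\min(|a|,|b|)$ be their first point of disagreement; since $(ab)_i=a_i$ and $i<|b|$, the finite comparison of $ab$ and $b$ is also decided at position $i$, so the hypothesis $T(01)=ab>b=T(1)$ forces $a_i>b_i$, whence $a\alpha>b\beta$ for \emph{all} infinite $\alpha,\beta$. If instead $b$ is a prefix of $a$ (which includes the case $a=b$), then $b$ is a prefix of $ab=T(01)$, contradicting the hypothesis that $T(1)$ is not a prefix of $T(01)$; so this case cannot occur.

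The remaining, and main, case is when $a$ is a proper prefix of $b$: here the first letters of $a\alpha$ and $b\beta$ agree and the finite comparison no longer settles matters. Writing $b=ac$ with $c$ nonempty, the hypotheses become $ac>c$ and $c$ not a prefix of $ac$, and the goal reduces to $\alpha>c\beta$. I would then factor out the leading power of $a$, writing $c=a^{K}c'$ where $K\ge 0$ is maximal such that $a^{K}$ is a prefix of $c$, so that $a$ is not a prefix of $c'$. The case $c'=\varepsilon$ is excluded, since it would make $T$ periodic with $c=a^{K}$ and hence $T(1)$ a prefix of $T(01)$. Cancelling $a^{K}$ shows that $c'$ inherits the conditions $ac'>c'$ and ``$c'$ is not a prefix of $ac'$''; combined with ``$a$ is not a prefix of $c'$'', a short argument (if $c'$ were a prefix of $a$ it would be a prefix of $ac'$) shows that $a$ and $c'$ are incomparable, with first disagreement at some $j<\min(|a|,|c'|)$ satisfying $a_j>c'_j$.

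The crux is a structural observation about images: every block of $\alpha=T(u')$ is either $a$ or $b=a^{K+1}c'$, and since $c'$ does not begin with $a$, the word $\alpha$ must begin with $a^{K+1}$. Comparing $\alpha$ with $c\beta=a^{K}c'\beta$, the common prefix $a^{K}$ cancels, leaving $\alpha[K|a|,\infty]$ (which begins with $a$) to be compared with $c'\beta$ (which begins with $c'$); these disagree first at position $j$, where $a_j>c'_j$, so $\alpha>c\beta$, as required. I expect this final case to be the main obstacle: the easy cases follow purely from the finite inequality $ab>b$, whereas here one must genuinely use that $\alpha$ is a morphic image—the desired inequality is false for an arbitrary infinite word beginning with $a$—and the hypothesis that $T(1)$ is not a prefix of $T(01)$ is exactly what guarantees $c'\neq\varepsilon$ and fixes the direction of the disagreement between $a$ and $c'$.
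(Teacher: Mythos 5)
Your proof is correct and is essentially the paper's argument in different notation: your $a^{K+1}c'$ decomposition of $T(1)$ with $a$ and $c'$ disagreeing at position $j$ (where $a_j>c'_j$) is exactly the paper's $T(1)=T(0^k)x0z$, $T(0)=x1y$ with $k=K+1$, and both proofs finish by observing that the image of any infinite word starting with $0$ begins with $T(0)^{k+1}$. (The only blemish is the throwaway justification ``each begins with $a$, since both $a$ and $b$ begin with the first letter of $a$,'' which doesn't follow as stated, but you never rely on it: in the one case where you need $\alpha$ to begin with a power of $a$, you have $a$ a prefix of $b$ and you prove the stronger $a^{K+1}$ claim correctly.)
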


\begin{proof}
By hypothesis $T(1)$ is not a prefix of $T(01)$, so $T(0) \neq \epsilon$,
and therefore we can write $T(01) = X1Y$ and $T(1) = X0z$ for some
words $X$, $Y$, and $z$.  Let $k$ be maximal such that
$T(0^k)$ is a prefix of $T(1)$.  Then $T(0^k)$ is a prefix of $T(01)$ and hence a
prefix of $X$.  Write $X = T(0^k)x$.  Then $T(1) = T(0^k)x0z$ and
$T(01) = T(0^{k+1})x0z = T(0^k)x1Y$.  Hence $T(0)x0z = x1Y$.  By the
maximality of $k$, the word $T(0)$ is not a prefix of $x$, so $x1$ is
a prefix of $T(0)$.  We therefore have $T(0) = x1y$ for some word $y$
and $T(1) = T(0^k)x0z$.

Now let $u$ and $v$ be infinite words such that $u<v$.  Without loss
of generality we may assume that $u$ begins with $0$ and $v$ begins
with $1$.  Then $T(u)$ begins with $T(0^{k+1}) = T(0^k)x1y$ and $T(v)$
begins with $T(1) = T(0^k)x0z$, and so $T(u) > T(v)$.  Hence $T$ is
order-reversing on infinite words, as required.
\end{proof}

\begin{lemma}\label{T1prefT01}
Let $T : \{0,1\}^* \to \{0,1\}^*$ be an aperiodic morphism such that
$T(1)$ is a prefix of $T(01)$.  Then $T$ is either
order-preserving on infinite words or $T$ is order-reversing on
infinite words.
\end{lemma}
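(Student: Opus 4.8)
The plan is to reduce the statement to a single comparison between two explicit periodic words and then to settle that comparison using the hypothesis that $T(1)$ is a prefix of $T(01)$. Throughout write $a=T(0)$ and $b=T(1)$. Since $T$ is aperiodic, neither image is empty and $a^\omega\neq b^\omega$ (equality would force $a$ and $b$ to share a primitive root, making $T$ periodic); hence exactly one of $a^\omega<b^\omega$ or $a^\omega>b^\omega$ holds. For infinite words $u<v$, factoring out their longest common prefix $w$ and writing $u=w0u'$, $v=w1v'$, the comparison of $T(u)$ and $T(v)$ reduces, after cancelling the common prefix $T(w)$, to that of $aT(u')$ and $bT(v')$, where $u'$ and $v'$ now range over \emph{all} infinite words. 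So it suffices to show that $aT(u')$ and $bT(v')$ always compare in the same direction.

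First I would pin down the extreme images. Let $S=\{T(w):w\in\{0,1\}^\omega\}$; as a continuous image of a compact space it is closed, so (working through the letters one at a time) $m:=\inf S$ and $M:=\sup S$ are attained. Since $S=aS\cup bS$, prepending a fixed word is an order-isomorphism and so $m=\min(am,bm)$ and $M=\max(aM,bM)$. An equality $m=am$ forces $m=a^\omega$ and $m=bm$ forces $m=b^\omega$, so $m\in\{a^\omega,b^\omega\}$, and as both $a^\omega,b^\omega\in S$ we get $m=\min(a^\omega,b^\omega)$; likewise $M=\max(a^\omega,b^\omega)$. Since $T(u')\le M$ and $T(v')\ge m$ (and the reversed inequalities with $m,M$ swapped), the whole question reduces to the single comparison of $aM$ with $bm$, that is, to comparing $ab^\omega$ with $ba^\omega$.

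The heart of the proof is to show that $ab^\omega$ and $ba^\omega$ compare in the same direction as $a^\omega$ and $b^\omega$, and this is where the hypothesis enters. Because $b$ is a proper prefix of $ab$, we may write $ab=bc$ with $c$ nonempty; an easy induction gives $a^nb=bc^n$, and letting $n\to\infty$ yields the identity $a^\omega=bc^\omega$. Now $ab^\omega=(ab)b^\omega=b(cb^\omega)$ while $ba^\omega=b\,a^\omega$, so cancelling the leading $b$ turns the comparison of $ab^\omega$ and $ba^\omega$ into that of $cb^\omega$ and $a^\omega=bc^\omega$; and cancelling $b$ from $a^\omega=bc^\omega$ and $b^\omega=b\,b^\omega$ turns the comparison of $a^\omega$ and $b^\omega$ into that of $c^\omega$ and $b^\omega$. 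It therefore remains to check that $cb^\omega$ and $bc^\omega$ compare in the same direction as $c^\omega$ and $b^\omega$, which is the standard fact that for nonempty words $u,v$ the inequalities $uv<vu$, $u^\omega<v^\omega$, and $uv^\omega<vu^\omega$ are equivalent (apply it with $u=c$ and $v=b$). Chaining these equivalences gives the desired matching.

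Finally I would assemble the dichotomy. If $a^\omega<b^\omega$, then $m=a^\omega$, $M=b^\omega$, and the matching gives $ab^\omega<ba^\omega$; hence $aT(u')\le ab^\omega<ba^\omega\le bT(v')$ for all $u',v'$, so $T(u)\le T(v)$ and $T$ is order-preserving on infinite words. If instead $a^\omega>b^\omega$, then $m=b^\omega$, $M=a^\omega$, and $ab^\omega>ba^\omega$, so $aT(u')\ge ab^\omega>ba^\omega\ge bT(v')$ and $T(u)>T(v)$, making $T$ order-reversing on infinite words. The step I expect to be most delicate is the reduction to the single comparison of $ab^\omega$ with $ba^\omega$ — keeping the suprema and infima and the strict-versus-non-strict inequalities straight — together with the correct use of the hypothesis through the identity $a^\omega=bc^\omega$; the concluding word-combinatorial equivalence is entirely standard.
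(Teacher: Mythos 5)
Your proof is correct, but it takes a genuinely different route from the paper's. The paper argues by cases on whether $T(1)$ is a prefix of $T(0)$: it uses the fact that $T(1)$, being a prefix of $T(0)T(1)$, has period $|T(0)|$, writes $T(0)$ and $T(1)$ as $x^k$ and $x^\ell y$ for a primitive word $x$ and a proper prefix $y$ of $x$, and then compares the explicit prefixes $x^{\ell+1}y$ and $x^{\ell}yx$ of $T(u)$ and $T(v)$, the dichotomy coming from $xy\neq yx$. You instead avoid any case split: after the (correct) observation that aperiodicity forces both images nonempty and $a^\omega\neq b^\omega$, you use compactness of $\{T(w):w\ \text{infinite}\}$ and the self-similarity $S=aS\cup bS$ to identify the extremal images as $a^\omega$ and $b^\omega$, reducing the whole lemma to the single comparison of $ab^\omega$ with $ba^\omega$; the hypothesis then enters only through the conjugation identity $ab=bc$, $a^\omega=bc^\omega$, and the comparison is settled by the classical equivalence $uv<vu\iff u^\omega<v^\omega$ (with equality exactly when $uv=vu$). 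All steps check out, including the strictness bookkeeping in the final chains of inequalities. What your approach buys is a uniform skeleton --- the reduction to comparing $a\sup S$ with $b\inf S$ is valid for any non-erasing binary morphism and would also streamline Theorem~\ref{dichotomy} --- at the cost of importing two external ingredients (the compactness/attainment argument and the $uv<vu\iff u^\omega<v^\omega$ equivalence, which you should either prove briefly or cite, e.g.\ via the standard induction $xy<yx\Rightarrow x^my^n<y^nx^m$); the paper's argument is more elementary and self-contained but needs the two-case analysis and explicit prefix computations.
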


\begin{proof}
Case~1: $T(1)$ is not a prefix of $T(0)$.
Note that $T(1)$ has period $|T(0)|$, and so we can write $T(0) = x^k$
and $T(1) = x^\ell y$, where $x$ is primitive, the exponent $\ell$ is
maximal, and $y$ is a non-empty proper prefix of $x$.  Let $u$ and $v$ be
infinite words with $u<v$.  Without loss of generality we may assume
that $u$ begins with $0$ and $v$ begins with $1$.  Note that $T(u)$
begins with $x^{\ell+1}y$ (since $y$ is a prefix of $x$) and $T(v)$
begins with $x^{\ell}yx$.  If $x^{\ell+1}y \neq x^{\ell}yx$, then
either $T(u) < T(v)$ for all $u<v$ or $T(u) > T(v)$ for all $u<v$.  Thus $T$ is either
order-preserving  on infinite words or order-reversing on infinite words.  If
$x^{\ell+1}y = x^{\ell}yx$, then  we have $xy=yx$, which implies that
$x$ and $y$ are powers of a common word, which contradicts the
primitivity of $x$.

Case~2: $T(1)$ is a prefix of $T(0)$.
Write $T(1) = x^k$ and $T(0) = x^{\ell}y$, where $x$ is primitive,
the exponent $\ell$ is maximal, and $y$ is non-empty.  Let $u$ and
$v$ be infinite words with $u<v$.  Without loss of generality we may
assume that $u$ begins with $0$ and $v$ begins with $1$.  Note that
$T(u)$ begins with $x^{\ell}y$ and $T(v)$ begins with
$x^{\ell+1}$.  If $y$ is not a prefix of $x$ then either $T(u) <
T(v)$ for all $u<v$ or $T(u) > T(v)$ for all $u<v$.
Thus $T$ is either order-preserving  on infinite words or
order-reversing on infinite words.   If $y$ is a prefix of $x$, we
apply the argument from Case~1 to obtain the desired conclusion.
\end{proof}

\begin{theorem}\label{dichotomy}
Let $T : \{0,1\}^* \to \{0,1\}^*$ be an aperiodic morphism.  Then $T$ is either
order-preserving on infinite words or $T$ is order-reversing on
infinite words.
\end{theorem}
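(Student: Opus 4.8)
The plan is to reduce the whole statement to a single case analysis driven by the relationship between $T(01)$ and $T(1)$, so that each branch is settled by one of the three preceding lemmas. First I would record that an aperiodic morphism is automatically non-erasing: if $T(1)=\epsilon$ then $T(0)=T(0)^1$ and $T(1)=T(0)^0$ are powers of the common word $T(0)$, contradicting aperiodicity, and symmetrically $T(0)\neq\epsilon$. Hence $|T(01)|=|T(0)|+|T(1)|>|T(1)|$, so $T(01)$ is never a prefix of $T(1)$. This observation is what makes the case split exhaustive and clean.

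Next I would split on whether $T(1)$ is a prefix of $T(01)$. If it is, Lemma~\ref{T1prefT01} gives the dichotomy outright. If it is not, then, since $T(01)$ is not a prefix of $T(1)$ either, the two words are incomparable for the prefix order and therefore strictly comparable lexicographically. When $T(01)>T(1)$, Lemma~\ref{order-reversing} shows directly that $T$ is order-reversing on infinite words, and the proof is complete in that branch.

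The only remaining branch is $T(1)$ not a prefix of $T(01)$ together with $T(01)<T(1)$, and here I expect to conclude that $T$ is order-preserving on infinite words. This is exactly the mirror image of Lemma~\ref{order-reversing}, and rather than repeating its argument with the roles of $0$ and $1$ interchanged I would pass to the complement morphism $T^{c}$ defined by $T^{c}(a)=\overline{T(a)}$, so that $T^{c}(w)=\overline{T(w)}$ for every word $w$. Complementation preserves lengths and prefix relations and takes powers of a common word to powers of a common word, so $T^{c}$ is aperiodic and $T^{c}(1)$ is not a prefix of $T^{c}(01)$. Because $T(01)$ and $T(1)$ stand in no prefix relation, complementation reverses their lexicographic order, giving $T^{c}(01)>T^{c}(1)$; thus Lemma~\ref{order-reversing} applies to $T^{c}$ and shows it is order-reversing on infinite words. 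Finally, since two distinct infinite words are never in a prefix relation, complementation reverses the lexicographic order of infinite words as well, and translating the conclusion for $T^{c}$ back through $T^{c}(u)=\overline{T(u)}$ yields that $u<v$ implies $T(u)<T(v)$, i.e.\ $T$ is order-preserving on infinite words.

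I expect the main subtlety to be bookkeeping about \emph{finite} versus \emph{infinite} order-preservation: Richomme's Lemma~\ref{order-preserving} only characterizes the finite notion, and the gap between the two is precisely what occurs under a prefix relation, which is why Lemma~\ref{T1prefT01} was isolated as a separate case. The ``$T(1)$ not a prefix of $T(01)$'' hypothesis is essential in the last branch because complementation reverses the lexicographic order of finite words only in the absence of a prefix relation; verifying that no prefix relation can sneak in (here guaranteed by non-erasingness together with the case hypothesis) is the one place where genuine care is required.
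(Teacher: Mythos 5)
Your proof is correct, and its overall skeleton — splitting on whether $T(1)$ is a prefix of $T(01)$ and, if not, on the sign of the comparison between $T(01)$ and $T(1)$, then delegating to Lemmas~\ref{order-reversing} and~\ref{T1prefT01} — is the same as the paper's. The one branch you handle genuinely differently is $T(01)<T(1)$ with no prefix relation: the paper simply cites Richomme's characterization (Lemma~\ref{order-preserving}) to conclude that $T$ is order-preserving, which strictly speaking only yields order-preservation on \emph{finite} words and leaves the passage to infinite words implicit; you instead pass to the complement morphism $T^{c}(a)=\overline{T(a)}$, check that the hypotheses of Lemma~\ref{order-reversing} transfer (complementation preserves prefix relations and flips the lexicographic comparison exactly when no prefix relation holds), and pull the conclusion back through the fact that distinct infinite words are never prefix-comparable. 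This symmetry argument buys you two things: it avoids any reliance on Richomme's lemma, and it closes the finite-versus-infinite gap that the paper glosses over, which you correctly identify as the main subtlety. Your explicit observation that aperiodicity forces $T$ to be non-erasing (so $T(01)$ can never be a prefix of $T(1)$) is also a welcome piece of bookkeeping that the paper leaves tacit in its four-way case list.
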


\begin{proof}
For an arbitrary morphism $T$, one of the following properties must
hold:
\begin{enumerate}
\item $T(01) < T(1)$ (in which case $T$ is order-preserving by
  Lemma~\ref{order-preserving});
\item $T(01) > T(1)$, but $T(1)$ is not a prefix of $T(01)$ (in which
  case $T$ is order-reversing on infinite words by Lemma~\ref{order-reversing});
\item $T(1)$ is a prefix of $T(01)$ and $T$ is aperiodic (in which
  case $T$ is either order-preserving on infinite words or
  order-reversing on infinite  words by Lemma~\ref{T1prefT01});
\item $T(1)$ is a prefix of $T(01)$ and $T$ is periodic.
\end{enumerate}
The only case where we don't have the desired conclusion is when $T$
is periodic.
\end{proof}

\section*{Acknowledgments}
We would like to thank Lucas Mol for his very helpful feedback on
earlier drafts of this paper.  We also thank the anonymous referees
for their excellent and useful comments.

\end{document}